\documentclass[english]{amsart}
\usepackage{amsfonts, amsmath, amssymb, amscd, amsthm, array, graphicx}
\usepackage[utf8]{inputenc}
\usepackage{amsmath}
\usepackage{stmaryrd}
\usepackage{graphicx}
\usepackage{amssymb}
\usepackage{dsfont}
\usepackage{babel}
\usepackage{color}
\usepackage[colorlinks=true, linkcolor=blue, citecolor=blue, urlcolor=blue, breaklinks=true]{hyperref}
\usepackage[capitalise]{cleveref}
\usepackage[all,cmtip]{xy}
\usepackage{float}
\usepackage{tikz}
\usetikzlibrary{arrows,shapes,automata,backgrounds,petri,decorations}
\usetikzlibrary{automata} 
\usetikzlibrary[automata]
\usepackage{geometry}
\usepackage{pdflscape}
\usetikzlibrary{external,automata,trees,positioning,shadows,arrows,shapes.geometric}
\usepackage{tikz-cd}
\usetikzlibrary{decorations.pathreplacing,decorations.markings}
\tikzset{close/.style={near start,outer sep=-2pt}} 
\tikzset{
  on each segment/.style={
    decorate,
    decoration={
      show path construction,
      moveto code={},
      lineto code={
        \path [#1]
        (\tikzinputsegmentfirst) -- (\tikzinputsegmentlast);
      },
      curveto code={
        \path [#1] (\tikzinputsegmentfirst)
        .. controls
        (\tikzinputsegmentsupporta) and (\tikzinputsegmentsupportb)
        ..
        (\tikzinputsegmentlast);
      },
      closepath code={
        \path [#1]
        (\tikzinputsegmentfirst) -- (\tikzinputsegmentlast);
      },
    },
  },
  mid arrow/.style={postaction={decorate,decoration={
        markings,
        mark=at position .5 with {\arrow[#1]{stealth}}
      }}},
}

\usepackage{mathtools}
\usepackage{hyperref,url}

\textwidth=6in \textheight=8.7in
\tolerance=9000 \emergencystretch=5pt \vfuzz=2pt
\parskip=1.5mm

\newtheorem{thm}{Theorem}[section]
\newtheorem*{thm*}{Theorem}
\newtheorem{cor}[thm]{Corollary}

\theoremstyle{definition}
\newtheorem{defn}[thm]{Definition}
\newtheorem{exam}[thm]{Example}

\theoremstyle{definition}
\newtheorem{rem}[thm]{Remark}

\newtheorem{nt}[thm]{Note}
\numberwithin{equation}{section}




\newcommand{\im}{\operatorname{Im}}

\newcommand{\res}{\operatorname{res}}
\newcommand{\cors}{\operatorname{cor}}

\newcommand{\ZZ}{\mathbb{Z}}
\newcommand{\QQ}{\mathbb{Q}}

\begin{document}

\title{Bogomolov multipliers of word labelled oriented graph groups}

\author{Mallika Roy}
\address{Harish-Chandra Research Institute, HBNI, Chhatnag Road, Jhunsi, Prayagraj (Allahabad) 211019, India.}
\email{mallikaroy75@gmail.com}

\subjclass{20E05, 20F05, 20F67}
\keywords{word labelled oriented graph group, Bogomolov multiplier, Artin group, Bestvina--Brady group.}

\begin{abstract} 
A group, whose presentation is explicitly derived in a certain way from a word labelled oriented graph (in short, WLOG), is called a WLOG group. In this work, we study homological version of Bogomolov multiplier (denoted by $\widetilde{B_0}$) for this family of groups. We prove how to compute the generators for the $\widetilde{B_0}(G)$ of a WLOG group $G$ from
the underlying WLOG. We exhibit finitely presented Bestvina--Brady groups and Artin groups as WLOG groups. As applications, we compute both the multipliers: the homological version of Bogomolov multipliers and Schur multipliers, of these groups utilizing their respective WLOG group presentations. Our computation gives a new proof of the structure of the Schur multiplier of a finitely presented Bestvina--Brady group.
\end{abstract}
\maketitle

\section{Introduction}\label{sec: intro}

In this paper, we discuss a homological version of Bogomolov multiplier, denoted by $\widetilde{B}_0(G)$ for a word labelled oriented graph (a.k.a WLOG) group $G$. The Bogomolov multiplier is a group theoretical invariant, which is isomorphic to the unramified Brauer group of a given quotient space. In~\cite{Moravec}, Moravec derived the functor $\widetilde{B}_0(G)$,
proved a Hopf-type formula and deduced a five term exact sequence corresponding to this invariant. As applications, we effectively compute $\widetilde{B}_0(G)$ for all Artin groups and Bestvina--Brady groups. 

Artin--Tits groups were introduced by J. Tits in the 1960s as natural generalizations of braid groups found by E. Artin. This is a class of groups determined through their presentations and spans a wide range of groups from free groups to free abelian groups and braid groups as well connecting low dimensional topology and geometry. We first recall the definition of Artin groups. An \emph{Artin group} $A$ is a group with presentation of the form:

\[
A = \langle a_1, \ldots , a_n \, | \, \underset{m_{ij}}{\underbrace{a_ia_ja_i\ldots}} \, = \, \underset{m_{ji}}{\underbrace{a_ja_ia_j\ldots}\,}, \,\text{ for all } i \neq j \rangle,
\]

where $m_{ij} = m_{ji}$ is an integer $\geq$ $2$ or $m_{ij} \, = \, \infty$ in which case we omit the relation between $a_i$ and $a_j$. If $a_i, a_j$ are two letters and $m_{ij}$ is an integer $\geq$ $2$, let $\langle a_i, a_j \rangle^{m_{ij}}$ denotes the word $a_ia_ja_i \ldots$ of finite length $m_{ij}$. We set $\langle a_i, a_j \rangle^{m_{ij}} = (a_ia_j)^{m_{ij}/2}$ if $m_{ij}$ is \emph{even} and $\langle a_i, a_j \rangle^{m_{ij}} = (a_ia_j)^{(m_{ij}-1)/2}a_i = a_i(a_ja_i)^{(m_{ij}-1)/2}$ if $m_{ij}$ is \emph{odd}. An Artin group $A$ is \emph{even} if any finite $m_{ij}$ is even; and we denote an even Artin group by $A_\Gamma$. The subscript $\Gamma$ comes from its corresponding \emph{Artin-Tits system}. We also recall the definition of an Artin-Tits system to set our convention. Let $\Gamma$ be a finite simplicial graph with vertex set $V\Gamma$ and the edge set $E\Gamma$, i.e., $V\Gamma$ is non-empty and neither loop nor the multiple edge is allowed. Also, let $m \colon E\Gamma \longrightarrow  \{2, 3, \ldots \}$ be a function. The pair $\Gamma = \left((V\Gamma, E\Gamma), m \right)$ is so called
\emph{Artin–Tits system}, and $m$ is called the \emph{labeling} of $\Gamma$. Clearly, for an even Artin group $A_\Gamma$, the labeling of $\Gamma$ is $m \colon E\Gamma \longrightarrow  \{2, 4, 6, \ldots \}$.

A \emph{right-angled Artin group} (in short, RAAG) is one in which $m_{ij} \in \{ 2, \infty\}$ for all $i,j$. In other words, in the presentation for the Artin group, all relations are commutator relations: $a_i a_j \, = \, a_j a_i$. We will denote an arbitrary RAAG by $G_\Gamma$. The conventional way to specify the presentation for a right-angled Artin group is by means of the defining graph $\Gamma$ (see Definition~\ref{dfn: RAAG, BB}). Here we would like to highlight that Artin-Tits system $\Gamma$ and the defining graph $\Gamma$ of a RAAG are eventually same---we ignore the labelling $m$ as it is always $2$, also we remind that we omit the relation from the presentation when $m =\infty$. We note that any finite, simplicial graph $\Gamma$ is the defining graph for a RAAG $G_\Gamma$. Right-angled Artin groups were first introduced in the 1970’s by A. Baudisch~\cite{Baudisch} and further developed in the 1980’s by C. Droms \cite{Droms-coherence}, \cite{Droms-isomorphism}, \cite{Droms-subgroups} under the name ``graph groups''.  The study of RAAGs from different perspectives
has contributed to the development of new, rich theories such as the theory of cube complexes and has been an
essential ingredient in Agol's solution to the Virtually Fibered Conjecture. Moreover, the family of RAAGs stands out as a family of groups from the richness of its family of subgroups: one of such family of subgroups is Bestvina--Brady groups.

A word labelled oriented graph (WLOG) group is a group given by a presentation constructed in a certain way from a WLOG (see~\ref{def: log groups}). The main examples include Wirtinger presentations of knot groups, Braid groups, right-angled Artin groups (see~\cite{Howie}, \cite{N. D. Gilbert}). In this article, we study two families of groups, namely, Bestvina--Brady groups and Artin groups, as WLOG groups. Then, we avail the Bogomolov multipliers and Schur multipliers of the above families of groups via their presentations as WLOG groups. We also compute the Bogomolov multiplier for right-angled Artin groups; in fact, $\widetilde{B_0}(G_\Gamma)$ is trivial.

In this article, we consider the Schur multiplier of a group $G$, which is the second homology group $H_2(G, \ZZ)$. For readability, we write $H_2(G)$ without mentioning the coefficient ring $\ZZ$.

The paper is organized as follows. In Section~\ref{sec: pre-nt}, we remind the general notations of graph theory and group theory; we recapitulate the basics of WLOG groups, 
the homological version of the Bogomolov multiplier for any group $G$, i.e., $\widetilde{B}_0(G)$, and the basics of Bestvina--Brady groups.
In Section~\ref{sec: main}, we first analyze the free crossed module, and $\widetilde{B}_0(G)$ for an arbitrary WLOG group. In Section~\ref{sec: bb wlog}, we give the WLOG group structure for an arbitrary finitely presented Bestvina--Brady group and study its Schur multriplier and the functor $\widetilde{B_0}$. Our result not only gives generators for the Schur multiplier of such a group, but also gives a new proof of the structure of its Schur multiplier.
Then in~\ref{sec: even Arin gp}, we illustrate the approach to the computations of $\widetilde{B}_0(A)$ and the $H_2(A)$ of an arbitrary Artin group $A$ through its presentation as a WLOG group. Finally, in Section~\ref{sec: examples}, we exhibit a couple of examples applying the results depicted in the previous sections.

Our main results are the following:

\begin{thm*}[Theorem~\ref{thm: multi. vs. log}]
Let $\Gamma$ be a finite WLOG and G be the corresponding WLOG group. Then $\widetilde{B}_0(G)$ is finitely generated and a finite set of generators is computable.
\end{thm*}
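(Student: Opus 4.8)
The plan is to read a finite presentation of $G$ off $\Gamma$, invoke Moravec's Hopf-type formula to present $\widetilde{B}_0(G)$ as a quotient of the Schur multiplier, and then make every ingredient explicit through the free crossed module studied earlier in this section. First I would fix the canonical free presentation $G=F/R$ coming from the finite WLOG $\Gamma$: the free group $F$ has one generator per vertex of $\Gamma$, and $R$ is the normal closure of the finite family of WLOG relators, one per edge, each of the form $w^{-1}xw=y$ with $x,y$ vertices and $w$ a word in the vertices. In particular $G$ is finitely presented. By Moravec's Hopf-type formula~\cite{Moravec}, $\widetilde{B}_0(G)=(R\cap[F,F])\big/\big([F,R]\,\langle K(F)\rangle\big)$, where $\langle K(F)\rangle$ is generated by the commutators $[f_1,f_2]\in R$ (the pairs whose images commute in $G$). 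Since the classical Hopf formula gives $H_2(G)=(R\cap[F,F])/[F,R]$ and $[F,R]\subseteq[F,R]\langle K(F)\rangle$, this realizes $\widetilde{B}_0(G)$ as a quotient of $H_2(G)$ by the image of $\langle K(F)\rangle$. Finite generation is then immediate: the presentation $2$-complex of $G$ is a finite complex, so its $H_2$ is finitely generated, $H_2(G)$ is a quotient of it, and $\widetilde{B}_0(G)$ is in turn a quotient of $H_2(G)$.

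The real content is computability of an explicit generating set, and for this I would use the free crossed module $\partial\colon C(\mathcal{P})\to F$ analyzed above, whose kernel is the $\ZZ G$-module of identities $\pi_2(\mathcal{P})$. The special shape of the WLOG relators should let one write down a finite $\ZZ G$-generating set of $\pi_2(\mathcal{P})$ explicitly from the combinatorics of $\Gamma$ (the Peiffer identities together with the geometric identities attached to the edges and to the commuting pairs of vertices). Abelianizing the relator module produces the integer matrix $\bar\partial_2$ of exponent sums, whose kernel $\ker\bar\partial_2\subseteq\ZZ^{|E\Gamma|}$ is computable by Smith normal form, and $H_2(G)$ is the quotient of $\ker\bar\partial_2$ by the image of the generating identities under the abelianized boundary. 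Crucially, the $G$-action becomes trivial after tensoring with $\ZZ$, so the finitely many module generators of $\pi_2(\mathcal{P})$ already yield finitely many abelian-group generators of that image; hence $H_2(G)$ and a finite generating set are computable. Finally I would pass to $\widetilde{B}_0(G)$ by identifying, again from $\Gamma$, a finite list of commuting pairs whose commutators generate the image of $\langle K(F)\rangle$ in $H_2(G)$, and quotient the computed $H_2(G)$ by this explicit set.

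I expect the main obstacle to be the middle step: producing a genuinely finite $\ZZ G$-generating set for the module of identities $\pi_2(\mathcal{P})$ of the WLOG presentation and proving that it generates. Deciding which $\ZZ$-combinations of relators lie in $[F,F]$ and reducing them modulo the Peiffer and geometric identities is exactly where the structure of $\Gamma$ must be exploited, and it is the technical heart on which both the finiteness and the computability of the generating set ultimately rest.
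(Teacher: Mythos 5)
Your reduction to finite generation is fine: $\widetilde{B}_0(G)=M(G)/M_0(G)$ is a quotient of $H_2(G)$, which is in turn a quotient of the (finitely generated) second homology of the finite presentation $2$-complex, and your denominator $[F,R]\langle K(F)\cap R\rangle$ collapses to Moravec's $\langle K(F)\cap R\rangle$ since $[F,R]\subseteq\langle K(F)\cap R\rangle$. The genuine gap is exactly the step you flag yourself as the ``technical heart'': producing a finite, explicitly computable $\ZZ G$-generating set for the module of identities $\pi_2(\mathcal{P})$ of the WLOG presentation. You give no argument for why the ``special shape of the WLOG relators'' yields such a set, and none is available in this generality --- the second homotopy module of a WLOG presentation is precisely what the asphericity literature for (W)LOGs (e.g.\ Harlander--Rosebrock, cited in the paper) struggles with, and no algorithm for computing identities among relations of an arbitrary finite presentation exists. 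Since your computation of $H_2(G)$ inside $R/[F,R]\cong\ZZ^{|E\Gamma|}/(\text{image of the identities})$ depends on that input, the computability claim is not established by your argument.

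The paper avoids this problem entirely by working relative to a spanning forest $\mathcal{T}$ of $\Gamma$ rather than over the free group on the vertices. Writing $G=\mathcal{G}/N$ with $\mathcal{G}=G(\mathcal{T})$ and $N$ normally generated by the finitely many relators attached to edges outside $\mathcal{T}$, one uses Remark~\ref{rem: classifying space} to see that the presentation $2$-complex of $\mathcal{G}$ has the homology of the suspension of a forest plus a point, so $H_2(\mathcal{G})=0$ and hence $\widetilde{B}_0(\mathcal{G})=0$; Moravec's five-term sequence~\eqref{equ: five term} for $N\to\mathcal{G}\to G$ then gives $\widetilde{B}_0(G)\cong N/\langle K(\mathcal{G})\cap N\rangle$. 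Because conjugation changes an element of $N$ only by an element of $K(\mathcal{G})\cap N$, this quotient is generated by the images of the finitely many explicit commutators $[\lambda_i(g),v_i]$ indexed by a free basis of $*_{i}\pi_1(\Gamma,v_i)$, i.e.\ by the edges outside the spanning forest. No knowledge of $\pi_2$ is required. If you want to salvage your route, you would need either to prove a finiteness and computability statement for $\pi_2$ of WLOG presentations (a substantial result in its own right) or to replace the free presentation $F/R$ by the relative presentation over the tree group, which is in effect what the paper does.
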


\begin{thm*}[Theorem~\ref{thm: bb grp LOG}]
  Let $H_\Gamma$ be a finitely presented Bestvina--Brady group. Then the WLOG group presentation of $H_\Gamma$ is algorithmically computable. 
\end{thm*}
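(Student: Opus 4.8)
The plan is to start from a known finite presentation of $H_\Gamma$ and rewrite it, through Tietze transformations, into the shape dictated by the definition of a WLOG group in \ref{def: log groups}; the algorithmic conclusion will then be automatic, since every step operates on the finite flag complex determined by $\Gamma$. First I would record the input precisely: by Bestvina--Brady, $H_\Gamma = \Ker(A_\Gamma \to \ZZ)$ is finitely presented exactly when the flag complex $L$ spanned by $\Gamma$ is simply connected, and $L$ is itself computable from $\Gamma$ by declaring every clique to be a simplex. I would then fix once and for all a total order on the vertices of $\Gamma$ and the induced orientation on its edges; this orientation is the bookkeeping device that later produces the orientation of the WLOG.

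Next I would invoke the finite presentation of $H_\Gamma$ due to Dicks and Leary, in which the generators are indexed by the oriented edges of $\Gamma$ and the defining relators are indexed by the $2$--simplices of $L$: to a triangle $\{a,b,c\}$ with $a<b<c$ one attaches the relation expressing the ``long'' edge $e_{ac}$ as the product $e_{ab}e_{bc}$, together with the commutativity of the three edges of that triangle. The key reformulation step is to massage each relator into the admissible shape $\iota\, w = w\, \tau$, where $\iota,\tau$ are edge--generators and $w$ is a word in the generators. I would use the product relation of each triangle as a Tietze move to eliminate the long edge $e_{ac}$, substitute it into the surviving commutativity relators, and check that what remains is a family of relations each equating a single edge--generator with a conjugate of a single edge--generator by an explicit word read off from the triangle. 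Declaring the surviving generators to be the vertices of a graph and each surviving relation to be an oriented edge carrying that word as its label then exhibits the WLOG, whose WLOG group is $H_\Gamma$ after these moves.

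The main obstacle is precisely this reformulation: a priori a triangle relator is a product, or a commutator of two compound words, which is \emph{not} of the form $\iota\, w = w\, \tau$, so the content of the proof is to show that, once the redundant edges are eliminated coherently across all triangles, every remaining relator genuinely collapses to a single--generator conjugacy relation with a well--defined word label, and that this elimination is confluent, i.e. independent of the order in which the triangles are processed. Here the simple connectivity of $L$ is what guarantees that the infinitely many Reidemeister--Schreier relators of the kernel collapse onto the finite triangle relators in the first place, and hence that the resulting oriented graph with word labels is a genuine finite WLOG rather than an over-- or under--determined system. I expect the orientation induced by the vertex order, together with a spanning tree of $\Gamma$ used to normalize the conjugating words along tree paths from the basepoint, to be the technical heart that makes the labels canonical.

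Finally, the algorithmic claim follows once the construction above is seen to be finite and explicit. The flag complex, its vertex order, the finite list of $2$--simplices, the Tietze eliminations, and the resulting labelled oriented edges are all produced by finite combinatorial procedures taking $\Gamma$ as input, so the WLOG, and with it the WLOG group presentation of $H_\Gamma$, is output by an algorithm. As a consistency check I would verify the construction on the degenerate cases where $L$ is contractible (for instance a single edge or two triangles sharing an edge), where it should return a right--angled Artin group, already known to be a WLOG group.
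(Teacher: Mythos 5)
Your overall route is the paper's: fix a spanning tree $T$ of $\Gamma$, pass to a presentation of $H_\Gamma$ whose generators are the edges of $T$ (the paper quotes Papadima--Suciu, Theorem~\ref{thm: PS presentation}, directly rather than re-deriving it from Dicks--Leary by Tietze eliminations, but this is the same reduction), and then read each surviving triangle relator as a commutation $e\,w=w\,e$, i.e.\ as a loop at the vertex $e$ labelled by the word $w$. The confluence worry you raise is resolved exactly by the normalization you propose: every non-tree edge is assigned the fixed word $w(e)$ in the tree generators, and the paper's proof is then a case analysis over the position of each edge of $\Gamma$ relative to $T$ and to the triangles containing it.

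There is, however, one concrete configuration on which your key claim --- that after elimination ``every remaining relator genuinely collapses to a single-generator conjugacy relation'' --- fails as stated: a triangle of $\Gamma$ \emph{none} of whose three edges lies in $T$. Such triangles occur (already in $K_5$ with $T$ a star: the triangle on three non-central vertices misses $T$ entirely, and one can arrange non-redundant instances in sparser graphs). For such a triangle the surviving relator is $[\,w(e),w(f)\,]$ with \emph{both} entries compound words in the tree generators, and this is not of the form $o(\varepsilon)\lambda(\varepsilon)=\lambda(\varepsilon)t(\varepsilon)$ for any choice of vertices among the surviving generators; so ``declaring the surviving generators to be the vertices'' produces no WLOG edge for it, and the resulting presentation would omit a relator. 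The paper's cases 2-(ii) and 2-(iii) exist precisely for this situation: one adjoins an auxiliary (redundant) vertex $w(e)$ to $V\widetilde{\Gamma}$ and places the loop labelled $w(f)$ there. Your Tietze framework absorbs the same fix --- a Tietze extension introducing a new generator equal to $w(e)$ before forming the loop --- but without it the construction is under-determined on exactly these triangles. With that case added, the finiteness and explicitness of every step give the algorithmic conclusion as you say.
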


As an application of the above theorems, we prove the following theorem for finitely presented Bestvina--Brady groups:

\begin{thm*}[Theorem~\ref{thm: Schur of BB}]
Let $H_\Gamma$ be a finitely presented Bestvina--Brady group. Then the explicit basis for $H_2(H_\Gamma)$ is computable, and $\widetilde{B}_0(H_\Gamma)$ is trivial.    
\end{thm*}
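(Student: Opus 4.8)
The plan is to run everything through the WLOG presentation and the free crossed module machinery of Section~\ref{sec: main}. First I would invoke Theorem~\ref{thm: bb grp LOG} to replace $H_\Gamma$ by an explicit, algorithmically computable WLOG presentation $\langle X\mid R\rangle$; since this is a finite combinatorial datum extracted from $\Gamma$ (equivalently from the flag complex $L$ with $L^{(1)}=\Gamma$), every invariant read off from it is computable, which already delivers the computability assertions in the statement. Concretely, the generators $X$ are the oriented edges of $\Gamma$, thought of as the elements $e_{u,v}=uv^{-1}\in\ker(G_\Gamma\to\ZZ)$ with $e_{v,u}=e_{u,v}^{-1}$, and the relators $R$ are indexed by the $2$-simplices of $L$. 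The two computations — an explicit basis of $H_2(H_\Gamma)$ and a generating set of $\widetilde{B}_0(H_\Gamma)$ — are then both carried out on this one fixed presentation.

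For the Schur multiplier I would use Hopf's formula $H_2(H_\Gamma)=(R\cap[F,F])/[F,R]$ together with the free crossed module analysis of Section~\ref{sec: main}, which identifies the module of identities among the relators. The key point is that the abelianisation of the relator map is governed by the simplicial boundary $\partial_2\colon C_2(L)\to C_1(L)$, whence $H_1(H_\Gamma)\cong\ZZ^{|V\Gamma|-1}$, and the undetermined part of $H_2$ is controlled by the identities among relations. Those identities are supplied by the higher simplices of $L$; because $L$ is simply connected — this is exactly the finite-presentability hypothesis, so $H_1(L)=0$ — the $2$-simplices already span the cycle space of $\Gamma$. I would therefore conclude that $H_2(H_\Gamma)$ is free abelian with an explicit basis indexed by a basis of $H_1(\Gamma;\ZZ)\cong\ZZ^{|E\Gamma|-|V\Gamma|+1}$, each basis class being represented by a triangle $\{u,v,w\}$ of $L$ through the element $e_{u,v}\wedge e_{v,w}$. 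This recovers, by a new route, the known structure of the Schur multiplier of a finitely presented Bestvina--Brady group.

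For the vanishing of $\widetilde{B}_0$ I would use the description $\widetilde{B}_0(H_\Gamma)=H_2(H_\Gamma)/M_0(H_\Gamma)$, where $M_0(H_\Gamma)$ is generated by the classes $x\wedge y$ with $[x,y]=1$. Each basis element produced above is of exactly this form: if $\{u,v,w\}$ is a $2$-simplex of $L$, then $u,v,w$ pairwise commute in $G_\Gamma$, so $e_{u,v}$ and $e_{v,w}$ commute in $H_\Gamma$, whence $e_{u,v}\wedge e_{v,w}\in M_0(H_\Gamma)$. As such classes already span $H_2(H_\Gamma)$, we obtain $H_2(H_\Gamma)=M_0(H_\Gamma)$ and therefore $\widetilde{B}_0(H_\Gamma)=0$. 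Theorem~\ref{thm: multi. vs. log} enters to guarantee that the generating set of $\widetilde{B}_0(H_\Gamma)$ is finite and computable from the WLOG, so that checking this finite list against $M_0(H_\Gamma)$ is effective; for Bestvina--Brady groups every such generator collapses.

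The hard part will be the Schur multiplier computation, and specifically the control of the module of identities among the WLOG relators (equivalently, the image of $\pi_2$ of the presentation $2$-complex) via the free crossed module of Section~\ref{sec: main}. One must verify that exactly the contributions of the simplices of dimension $\ge 3$ are quotiented out, that no torsion survives, and that the residual group is precisely the cycle space of $\Gamma$ rather than something larger; in homotopical terms this is the assertion that the higher reduced homology $\tilde H_{\ge 2}(L)$ does not leak into degree $2$ but only surfaces in $H_3(H_\Gamma)$. Simple connectivity of $L$ is what forces this separation, and it is the crux on which both the explicit basis and, through it, the triviality of $\widetilde{B}_0(H_\Gamma)$ ultimately rest.
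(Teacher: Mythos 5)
Your overall architecture is the same as the paper's: pass to the WLOG presentation of Theorem~\ref{thm: bb grp LOG}, identify $H_2(H_\Gamma)$ and $\widetilde{B}_0(H_\Gamma)$ via the Hopf-type formulas as $N/[N,F]$ and $N/\langle K(F)\cap N\rangle$ for the resulting free presentation, and kill $\widetilde{B}_0$ by observing that every relator is a commutator of a pair of elements that commute in $H_\Gamma$. Your $\widetilde{B}_0$ argument is essentially the paper's in different clothing: you say the generators of $M(H_\Gamma)$ lie in $M_0(H_\Gamma)$, the paper says $\langle K(F_n)\cap N\rangle = N$; these are the same statement through Moravec's formula, and note that this half of the theorem needs only a generating set of $H_2$ consisting of wedges of commuting elements, not an actual basis, so it is genuinely complete once Theorem~\ref{thm: multi. vs. log} (or \cite[Thm.~2.2(b)]{N. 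D. Gilbert}) is invoked for the generating set. (A cosmetic discrepancy: the WLOG presentation of Theorem~\ref{thm: bb grp LOG} is built on the Papadima--Suciu generators, i.e.\ the edges of a favourable spanning tree, not on all oriented edges of $\Gamma$ as in Dicks--Leary; this does not affect the argument.)

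The genuine gap is in the Schur multiplier half, and it is exactly the step you flag as ``the hard part'' and then do not carry out. First, torsion-freeness: you assert that ``no torsion survives'' but give no mechanism. The paper's device is short and you should have it: since the spanning forest of $\widetilde{\Gamma}$ is discrete, $\mathcal{G}\cong F_n$ and $R=N\leqslant [F_n,F_n]$, so the inclusion induces an injection $N/[N,F_n]\hookrightarrow [F_n,F_n]/[[F_n,F_n],F_n]\cong H_2(F_n^{\mathrm{ab}})\cong \ZZ^{n(n-1)/2}$, and a subgroup of a free abelian group is free abelian. Second, the rank and the basis: you claim the rank is $|E\Gamma|-|V\Gamma|+1$ with basis given by triangle classes $e_{u,v}\wedge e_{v,w}$ ``indexed by a basis of $H_1(\Gamma;\ZZ)$,'' but the number of triangles of $\Gamma$ generally exceeds $|E\Gamma|-|V\Gamma|+1$ (already for $K_4$: four triangles versus rank three for $H_2(\ZZ^3)$), so one must decide \emph{which} triangle classes are independent and discard the redundant ones. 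That selection is precisely what the paper's favourable/unfavourable spanning tree apparatus is for, and what makes the basis ``explicit'' and ``computable'' in the sense of the statement; simple connectivity of $\triangle_\Gamma$ gives you that the triangle boundaries span the cycle space, but by itself it does not produce the independence statement or the choice of representatives. As written, your proof establishes the triviality of $\widetilde{B}_0(H_\Gamma)$ but only asserts, without proof, the structure and basis of $H_2(H_\Gamma)$.
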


We further extend our application towards the class of Artin groups by giving their WLOG group structure. It is worth mentioning that the existing results about (co)homology of Artin groups all focus on particular types Artin groups, for which the $K(\pi, 1)$ conjecture has been proved. There are very few properties that can be said for (co)homology of all Artin groups (except for their first integral homology, which are simply abelianizations). 
In the subsection~\ref{sec: even Arin gp}, we compute the homological version of Bogomolov multipliers and the second homology group of \emph{all} Artin groups without assuming an affirmative solution of the $K(\pi, 1)$ conjecture.

In this context, our result is the following:

 

\begin{thm*}[Theorem~\ref{thm: multiplier vs Artin}]
    Let $A$ be an Artin group. Then the WLOG group presentation of $A$ is computable. Also, the structure of $H_2(A)$ and $\widetilde{B}_0(A)$ follow from that particular presentation. 
\end{thm*}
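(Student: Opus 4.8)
The plan is to first convert the standard Artin presentation into a word labelled oriented graph presentation, and then to feed this presentation into the machinery already developed for arbitrary WLOG groups in \cref{sec: main} together with \cref{thm: multi. vs. log}. Writing $A = \langle a_1,\dots,a_n \mid \langle a_i,a_j\rangle^{m_{ij}} = \langle a_j,a_i\rangle^{m_{ij}} \rangle$, I would treat each defining relation separately and rewrite it as a single conjugation relation of the shape $w^{-1}xw = y$ with $x,y$ vertices and $w$ a word in the generators, using throughout the elementary free-group identity $(a_ia_j)^k a_i = a_i(a_ja_i)^k$. Since each such rewriting is an explicit finite manipulation, the resulting WLOG, and hence the WLOG group presentation of $A$, is algorithmically computable, which is the first assertion.

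For the rewriting I would split into the odd and even cases. If $m_{ij} = 2k+1$ is odd, then $\langle a_i,a_j\rangle^{m_{ij}} = (a_ia_j)^k a_i = a_i(a_ja_i)^k$, so the relation becomes $a_i(a_ja_i)^k = (a_ja_i)^k a_j$, i.e. $(a_ja_i)^{-k}a_i(a_ja_i)^k = a_j$; this is exactly an oriented edge from $a_i$ to $a_j$ labelled by the word $(a_ja_i)^k$. If $m_{ij} = 2k$ is even, the relation reads $(a_ia_j)^k = (a_ja_i)^k$, and I would show it is equivalent, as normal subgroups of the ambient free group, to the commutator relation $[(a_ia_j)^k,\, a_i] = 1$: one inclusion uses $(a_ia_j)^k a_i = a_i(a_ja_i)^k$ to pass from $(a_ia_j)^k=(a_ja_i)^k$ to the centrality of $(a_ia_j)^k$ against $a_i$, and the converse simply reverses this step. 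This presents the even relation as a loop at $a_i$ labelled by $(a_ia_j)^k$. Collecting these edges and loops over all pairs with $m_{ij}<\infty$ produces the desired WLOG.

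With $A$ realised as a WLOG group, the remaining two assertions follow from the general results. \cref{thm: multi. vs. log} immediately yields a computable finite generating set for $\widetilde{B}_0(A)$; I would then analyse these generators for the Artin WLOG and would expect, as for RAAGs, every generator to be of commutator type and hence to vanish in $\widetilde{B}_0$, giving $\widetilde{B}_0(A)=0$. For $H_2(A)$ I would invoke the free crossed module description of \cref{sec: main}, specialised to the Artin WLOG: since Hopf's formula $H_2(A) = (R\cap[F,F])/[F,R]$ depends only on the presentation, this computes $H_2(A)$ with no appeal to asphericity or to the $K(\pi,1)$ conjecture, and reading off the surviving generators gives the explicit structure.

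The main obstacle I anticipate is twofold. First, establishing the even case as a genuine equality of normal closures, rather than a mere equivalence in the quotient, must be done with care, since the loop relator $[(a_ia_j)^k,a_i]$ looks quite different from $(a_ia_j)^k(a_ja_i)^{-k}$; the identity above is precisely what makes both inclusions go through. Second, and more seriously, extracting the structure of $H_2(A)$ requires identifying the module of identities among the WLOG relations, equivalently the kernel of the crossed-module boundary map, for the Artin WLOG; this is the genuinely combinatorial heart of the argument, and it is exactly here that the formalism of \cref{sec: main} should pay off by reducing the task to a finite, computable one even when $A$ is not known to be aspherical.
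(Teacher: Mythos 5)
Your construction of the WLOG $\widetilde{\Gamma}$ agrees with the paper's: odd $m_{ij}=2k+1$ gives a genuine oriented edge from $a_i$ to $a_j$ labelled $(a_ja_i)^{k}$, and even $m_{ij}=2k$ gives a loop at $a_i$ (your label $(a_ia_j)^{k}$ versus the paper's $a_j(a_ia_j)^{k-1}$ is a cosmetic difference; both relators have the same normal closure), so the computability claim is fine. The genuine gap is in your expected conclusion that $\widetilde{B}_0(A)=0$ for every Artin group ``as for RAAGs.'' This is false, and the paper's own computation shows why: writing $A\cong\mathcal{G}/N$ with $\mathcal{G}$ the WLOG group on a spanning forest $\mathcal{T}$ of $\widetilde{\Gamma}$, \cref{thm: multi. vs. log} gives $\widetilde{B}_0(A)\cong N/\langle K(\mathcal{G})\cap N\rangle$, and the generators of $N$ coming from \emph{non-loop} edges $e\in E\widetilde{\Gamma}\setminus E\mathcal{T}$ are relators $o(e)\lambda(e)t(e)^{-1}\lambda(e)^{-1}$ with $o(e)\neq t(e)$. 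Such a relator asserts that two distinct generators are conjugate; it is not a commutator of two commuting elements and does not lie in $\langle K(\mathcal{G})\cap N\rangle$, so its class survives. Only the loop relators (the even/commutation relations) die. The paper concludes $\widetilde{B}_0(A)\cong\ZZ^{p}$, where $p$ is the number of non-loop edges of $\widetilde{\Gamma}$ outside the spanning forest, i.e.\ $p$ counts independent cycles in the odd-labelled part of the Artin--Tits system. Concretely, for $A=\langle a_1,a_2,a_3\rangle$ with $m_{12}=5$, $m_{13}=3$, $m_{23}=7$ (the paper's second example), $\widetilde{\Gamma}$ is a triangle of non-loop edges, $p=1$, and $\widetilde{B}_0(A)\cong\ZZ$. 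Your heuristic works only when all finite labels are even (the even Artin/RAAG case, where $\widetilde{\Gamma}$ consists entirely of loops), and that is exactly the content of the paper's Corollaries~\ref{cor: multiplier vs even Artin} and~\ref{cor: raag}.

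A secondary point: the ``combinatorial heart'' you anticipate for $H_2(A)$ --- identifying the module of identities among the WLOG relations --- is not actually needed. The paper simply invokes Gilbert's theorem for WLOG groups (\cite[Thm.~2.2(b)]{N. D. Gilbert}), which gives $H_2(A)\cong N/[N,\mathcal{G}]\cong\ZZ^{m}$ with $m=|E\widetilde{\Gamma}\setminus E\mathcal{T}|$, generated by the $\psi_S$-images of the relators attached to the edges outside the spanning forest; no analysis of identities or asphericity enters. So that part of your plan is workable but harder than necessary, while the $\widetilde{B}_0$ claim needs to be corrected as above.
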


As corollaries we discuss the same for \emph{even} Artin groups and right-angled Artin groups.


\section{Preliminaries and Notation}\label{sec: pre-nt}
\subsection*{Graph theory} We recall the basic definitions and fix the notations of graph theory. A finite \emph{simplicial graph} is a finite graph that have no loops and multi-edges.
Given a graph $\Gamma$, we denote the set of its vertices and edges by $V\Gamma$ and $E\Gamma$, respectively. The \emph{order} of a graph is its number of vertices. We denote by $e_{i,j}=(a_i, a_j)$ an edge connecting the vertices $a_i$ and $a_j$ and we say that the vertices $a_i$ and $a_j$ are \emph{adjacent}. Sometimes, we consider oriented edges and in this case, we denote by $o(e)$ and $t(e)$, the initial and the terminal vertices of the edge $e$, i.e. if $e=(a_i, a_j)$ is an oriented edge, then $e=(o(e), t(e))$. For any two vertices $a_i\neq a_j$ in $V\Gamma$, a path $p_{i,j}$ from $a_i$ to $a_j$ in $\Gamma$ is a sequence of vertices $a_i=a_{i_1}, a_{i_2}, \dots, a_{i_{d+1}}=a_j$ such that $a_{i_k}\in V\Gamma$ and $(a_{i_{k-1}}, a_{i_k})\in E\Gamma$, for $k=2, \dots, d+1$. A \emph{spanning tree} of $\Gamma$ is a subgraph of $\Gamma$ which is a tree and contains every vertex of $\Gamma$. A path from the vertex $a_i$ to the vertex $a_j$ along the spanning tree $T$ of $\Gamma$ is denoted by $[a_i,a_j]_T$.

Given any subset $V'$ of $V\Gamma$, the \emph{induced subgraph} (or full subgraph) on $V'$ is a graph $\Gamma'$ whose vertex set is $V'$, and two vertices are adjacent in $\Gamma'$ if and only if they are adjacent in $\Gamma$. 

The \emph{flag complex} defined by $\Gamma$ and denoted by $\triangle_\Gamma$, is a finite simplicial complex whose $1$ skeleton is $\Gamma$ and $\triangle_\Gamma$ contains an ($n$)-simplex bounding each complete induced subgraph (with $(n+1)$-vertices) of its 1-skeleton $\Gamma$. 
Given a graph $\Gamma$ we construct the flag complex $\triangle_\Gamma$ as follows: 
the vertex set is the ground vertex set $V\Gamma$ and a subset of cardinality $n+1$ is a $n$-simplex if and only if the induced subgraph is a $n$-clique. In the literature, the term clique complex is also used for the flag complex.

\subsection*{Group theory}\label{sec: pre gp th} We fix some notations used throughout the paper. Given a group $G$, and elements $g,h \in G$, we denote  by $g^h=h^{-1}gh$ the conjugate of $g$ by $h$, and we extend this notation to subsets $R,S\subseteq G$ writing $R^S=\{r^s \mid r\in R,\,\, s\in S\}$. The normal closure of $R\subseteq G$ in $G$ is $\left\langle \! \left\langle{R}\right\rangle \! \right\rangle_G =\langle{R^G} \rangle$, namely the smallest normal subgroup of $G$ containing $R$. The
commutator $[g, h]$ of elements $g$ and $h$ is defined by $[g, h] = g^{-1}h^{-1}gh$.
If $H$ and $K$ are subgroups of $G$, then we define $[H, K] = \langle \, [h, k] \, | \, h \in H, k \in K \, \rangle$. The \emph{commutator subgroup} $\gamma_2(G)$ of $G$ is defined to be the group $[G, G]$. The set
$\{\,[g, h] \,|\, g, h \in G \,\}$ of \emph{all} commutators of $G$ is denoted by $K(G)$. Let $H, K \trianglelefteq G$, the group $H \wedge K$ is generated by $h \wedge k$, where $h \in H$, $k \in K$ and satisfying the following relations,
\begin{itemize}
    \item[(i)] $hh' \wedge k = (h'^{(h^{-1})} \wedge k^{(h^{-1})})(h \wedge k)$,
    \item[(ii)] $h \wedge kk' = (h \wedge k)(h^{(k^{-1})} \wedge k'^{(k^{-1})})$,
    \item[(iii)] $x \wedge x =1$,
\end{itemize}
for all $h, h' \in H$, $k, k' \in K$ and for all $x \in H \cap K$.

\subsection{Word Labelled Oriented Graph groups}\label{sec: LOG}

 We start this subsection by defining word labelled oriented graph (WLOG) and WLOG groups (see~\cite{Harlander-Rosebrock}, \cite{Howie} for more details).

\begin{defn}\label{def: log groups}
    A \emph{word labelled oriented graph (WLOG)} is a graph
$\Gamma$ with vertex set $V = V\Gamma$, edge set $E = E\Gamma$, and initial and terminal vertex
maps $o, t : E \rightarrow V$ ; together with a third map $\lambda : E \rightarrow V^{*}$ called the \emph{labelling}, where $V^{*}$ denotes the set of non-empty words on the alphabet $V^{\pm 1}$ and each oriented edge is labeled by a non-empty word in $V^{\pm}$.
In the case where the underlying graph is a tree we speak of a word labelled oriented tree (WLOT).
To any WLOG $\Gamma$ we associate a presentation

\[
\mathcal{P} = \mathcal{P}(\Gamma) = \langle V\Gamma \, \, | \, \, o(e)\lambda(e) = \lambda(e) t (e) \rangle 
\]

of a group $G = G(\Gamma)$. 

Any group arising in this way is called a \emph{WLOG group} (a \emph{WLOT} group if $\Gamma$ is a tree).

\end{defn}

Due to the Theorem~\ref{thm: Ratcliffe} of Ratcliffe~\cite{Ratcliffe}, there is an interplay between WLOG groups and free crossed modules. We use this to prove Theorem~\ref{thm: multi. vs. log} (see below) and it was also used in \cite[Thm.~2.2]{N. D. Gilbert}.
Before, stating the Theorem~\ref{thm: Ratcliffe}, we remind the notion of \emph{weight}.
The \emph{weight} of a finitely generated group is the minimum number of elements needed to generate it as a normal subgroup. If $G$ is a group of weight $k$, then
a weight set in $G$ is a subset $X \subseteq G$ with $|X| = k$ whose normal closure is $G$.

\begin{thm}[J. G. Ratcliffe; \cite{Ratcliffe}]\label{thm: Ratcliffe}
Let $\varphi : C \rightarrow G$ be a crossed module, with $N = \im \varphi$ and $Q = G/N$. Then $C$ is a free crossed module on the subset $X \subseteq C$ if and only if
\begin{itemize}
    \item[(i)] the abelianisation $C^{\text{ab}}$ is a free $Q$-module, with the image of $X$ in $C^{\text{ab}}$ as a basis,
    \item[(ii)] the image of $X$ in $G$ is a weight set for $N$, 
    \item[(iii)] the map $\varphi_* : H_2(C) \rightarrow H_2(N)$ induced by $\varphi$ is trivial. 
\end{itemize}
\end{thm}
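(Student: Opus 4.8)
The plan is to fix, once and for all, the free crossed module $\partial_X\colon C_X \to G$ on the set $X$ with respect to the restriction $\varphi|_X\colon X \to G$, and to study the canonical comparison morphism. By the universal property of $C_X$ there is a unique crossed module morphism $\theta\colon C_X \to C$ over $\mathrm{id}_G$ restricting to the inclusion on $X$, and by definition $C$ is free on $X$ if and only if $\theta$ is an isomorphism. So the whole problem reduces to deciding when $\theta$ is an isomorphism, and the three conditions should be read as the obstructions to this. Throughout I will use the three standard structural features of a crossed module $\varphi\colon C \to G$: the image $N = \im\varphi$ is normal in $G$ (by $G$-equivariance of $\varphi$), the kernel $\Ker\varphi$ is central in $C$ (by the Peiffer identity), and $\Ker\varphi$ carries the structure of a $Q$-module on which $N$ acts trivially. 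These give, for both $C$ and $C_X$, a central extension $1 \to \Ker\varphi \to C \xrightarrow{p} N \to 1$, where $p$ is the corestriction of $\varphi$.

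For the substantial (``if'') direction I would argue homologically. Condition (ii) guarantees that the normal closure of $\varphi(X)$ is exactly $N$, so that $\im\partial_X = N$ and the cokernels agree, $G/\im\partial_X = Q$; in particular $\theta$ induces the identity on $N$. The engine is the five-term exact sequence of the central extension above, which for $C$ reads
\[
H_2(C) \xrightarrow{p_*} H_2(N) \to \Ker\varphi \to C^{\mathrm{ab}} \to N^{\mathrm{ab}} \to 0,
\]
using that $N$ acts trivially on the central (hence abelian) kernel, so that the relevant coinvariants are $\Ker\varphi$ itself. Condition (iii) says precisely that $p_* = \varphi_*$ vanishes, which truncates the sequence to a short exact sequence $0 \to H_2(N) \to \Ker\varphi \to \ker(C^{\mathrm{ab}} \to N^{\mathrm{ab}}) \to 0$, and the same holds verbatim for $C_X$. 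Now condition (i), combined with $\im\partial_X = N$ from (ii), makes $\theta^{\mathrm{ab}}$ a morphism of free $Q$-modules carrying the basis $X$ to the basis $X$, hence an isomorphism. Comparing the two short exact sequences via $\theta$ (which is the identity on $H_2(N)$ and on $N^{\mathrm{ab}}$) and applying the five lemma forces $\theta$ to restrict to an isomorphism on the kernels; a second application of the five lemma to the two central extensions over $N$ then forces $\theta$ itself to be an isomorphism, so that $C$ is free on $X$.

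For the (``only if'') direction I would invoke the structure theory of free crossed modules: (i) is the classical statement that the abelianisation of a free crossed module is the free $Q$-module on its basis, and (ii) is immediate, since the image of a free crossed module is by construction the normal closure of the generating set. The one genuinely nontrivial input, and the step I expect to be the main obstacle, is (iii): the vanishing of $\varphi_*\colon H_2(C) \to H_2(N)$ for a free crossed module. This is an asphericity-type property, reflecting that the free crossed module $C_X \to G$ \emph{resolves} $N$ without introducing spurious two-dimensional homology; I would prove it either through the identity-sequence/partial-free-resolution model of $C_X^{\mathrm{ab}}$ (realising $C_X$ as a relative second homotopy module and computing $H_2$ directly) or by bootstrapping the very same five-term sequence in the already-understood case where $G$ is free. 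Once (iii) is available for free crossed modules, the naturality of the five-term sequence makes both directions fit together, and the only remaining work is the bookkeeping that checks that all identifications of coinvariants, module structures, and comparison maps are natural in $\theta$.
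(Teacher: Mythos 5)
First, a point of comparison: the paper does not prove this statement at all --- it is quoted from Ratcliffe's \emph{Free and projective crossed modules} and used as a black box --- so there is no internal proof to measure your argument against. Judged on its own terms, your reduction has the right shape: forming the free crossed module $\partial_X\colon C_X\to G$ on $\varphi|_X$, taking the canonical comparison $\theta\colon C_X\to C$ over $\mathrm{id}_G$, and playing the two central extensions $1\to\ker\partial_X\to C_X\to N\to 1$ and $1\to\ker\varphi\to C\to N\to 1$ against each other via the five-term sequence and the five lemma is a clean way to organise the ``if'' direction, and your identification of the relevant coinvariants with the full central kernel is correct.

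There is, however, a genuine gap, and it sits exactly where you say you expect trouble. To truncate the five-term sequence for $C_X$ into a short exact sequence you need $H_2(C_X)\to H_2(N)$ to vanish; condition (iii) hands you this for $C$, but for $C_X$ it is precisely the assertion that free crossed modules satisfy (iii) --- that is, the hard clause of the ``only if'' direction. So both halves of your argument rest on a single unproved statement, the triviality of the induced map $H_2(C_X)\to H_2(N)$ for a free crossed module, and the two strategies you gesture at (identity sequences / relative $\pi_2$, or bootstrapping from the case of free $G$) are named but not executed. Since this vanishing is the substantive mathematical content of Ratcliffe's theorem, what you have is a correct reduction of the theorem to its key lemma rather than a proof. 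A secondary omission: under the paper's definition, a weight set for $N$ must have cardinality equal to the weight of $N$, so in the ``only if'' direction condition (ii) is not immediate from normal generation alone --- you must also exclude a smaller normal generating set, which again has to be extracted from the freeness of $C^{\mathrm{ab}}$ as a $\mathbb{Z}Q$-module rather than ``by construction''.
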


This theorem plays a central role for \cite[Thm.~2.2]{N. D. Gilbert} on the computation of the generators of the second Homology group $H_2(G)$, when $G$ is a WLOG group.

\subsection{Bogomolov multiplier and the functor $\widetilde{B}_0$}\label{sec: bog mul} 
Consider the group $G \wedge G$, called the \emph{non-abelian exterior square} of $G$. By construction (see~\ref{sec: pre gp th}), the map $\zeta \colon G \wedge G \rightarrow \gamma_2(G)$, given by $g \wedge g' \mapsto [g, g']$, $g, g' \in G$, is a well defined homomorphism of groups. Clearly $M(G) = \ker \zeta$ is central in $G \wedge G$. In~\cite{Miller}, Miller proved that there is a natural isomorphism between $M(G) \text{ and } H_2(G, \ZZ)$. As a direct consequence, $G \wedge G$ is naturally isomorphic to $\gamma_2(F )/[R, F ]$, where the group $G$ is given by a free presentation $G \cong F/R$. The \emph{Bogomolov multiplier} of a group $G$ is defined as the subgroup of the Schur multiplier consisting of the cohomology classes vanishing after restriction to all abelian subgroups of $G$.

\begin{equation*}
   B_0(G) = \ker[H^2(G, \QQ/\ZZ) \rightarrow \underset{A \leqslant G}{\bigoplus} H^2(A, \QQ/\ZZ)] 
\end{equation*}

$\res^G_A \colon H^2(G, \QQ/\ZZ) \rightarrow H^2(A, \QQ/\ZZ)$ is the usual cohomological restriction map. Let $H \leqslant G$. Then there is a corestriction
map $\cors^H_G \colon H_2(H, \ZZ) \rightarrow H_2(G, \ZZ)$. Let us define $M_0(G) = \langle \cors^A_G M(A) \,\mid \, A \leqslant G; A \text{ abelian} \rangle$.
This group can be described as a subgroup of $G \wedge G$ in the following way. Let $G$ be a group. Then $M_0(G) = \langle x \wedge y \, \mid \, x, y \in G, [x, y] = 1 \rangle$.
For a group $G$, the functor $\widetilde{B}_0$ is the following,
$$\widetilde{B}_0(G) = M(G)/ M_0(G),$$ 
where $M_0(G)$ is the subgroup of $M(G)$ generated by all $x \wedge y$ such that $x, y \in G$ commute. In the finite case, $\widetilde{B}_0(G)$ is thus (non-canonically) isomorphic to $B_0(G)$. The functor $\widetilde{B}_0(G)$ can be studied within the category of all
groups. In~\cite{Moravec}, P. Moravec proved a Hopf-type formula for $\widetilde{B}_0(G)$ by showing that if $G$ is given by a free presentation $G = F/R$, then

\begin{equation}\label{equ: Hopf type}
    \widetilde{B}_0(G) \cong \gamma_2(F ) \cap R/\langle K(F) \cap R\rangle,
\end{equation}

where $K(F)$ denotes the set of commutators in $F$. A special case of this was
implicitly used before by Bogomolov~\cite{Bogomolov}, and Bogomolov, Maciel and Petrov~\cite{Bogomolov-Maciel-Petrov}.
P. Moravec~\cite{Moravec} derived a five term exact sequence

\begin{equation}\label{equ: five term}
    \widetilde{B}_0(G) \rightarrow \widetilde{B}_0(G/N) \rightarrow N/\langle K(G) \cap N\rangle \rightarrow G^{\text{ab}} \rightarrow  {G/N}^{\text{ab}}\rightarrow 0,
\end{equation}

where $G$ is any group and $N$ a normal subgroup of $G$. This is a direct analogue of the
well-known five term homological sequence.




\subsection{Bestvina--Brady groups}\label{sec: fav triangles}

\begin{defn}\label{dfn: RAAG, BB}
Let $\Gamma$ be a finite simplicial graph with the vertex set $V\Gamma$ and the edge set $E\Gamma$. 
The \emph{right-angled Artin group} $G_{\Gamma}$ associated to $\Gamma$ has the following finite presentation:

\[
G_\Gamma = \bigl\langle V\Gamma \mid [a_i,a_j]=1 \text{ for each edge } (a_i,a_j) \in E\Gamma \bigr\rangle.
\]

Let $\varphi \colon G_\Gamma \rightarrow \ZZ$ be the group homomorphism sending all generators of $G_\Gamma$ to the generator $1$ of $\ZZ$. We call this epimorphism \emph{the canonical epimorphism} (from $G_\Gamma$ to $\ZZ$). The \emph{Bestvina--Brady group} $H_\Gamma$ associated with $\Gamma$ is the kernel $\ker \varphi$ of the canonical epimorphism $\varphi$. 
\end{defn}
In their influential work \cite{BB}, Bestvina and Brady described the homological finiteness properties of the kernels $H_\Gamma$ of the epimorphisms from $G_\Gamma$ to $\mathbb Z$ that map every standard generator of $G_\Gamma$ to $1$ in terms of the topology of the flag complex $\triangle_\Gamma$. More precisely, they prove that $H_\Gamma$ is finitely presented if and only if $\triangle_\Gamma$ is simply connected and $H_\Gamma$ is of type $FP_{n+1}$ if and only if $\triangle_\Gamma$ is $n$-acyclic.

In \cite{DL}, Dicks and Leary gave an explicit presentation for $H_\Gamma$. The generators in the presentation correspond to the edges $e_i$ of $\triangle_\Gamma$ and in the case when $\triangle_\Gamma$ is simply connected, it is shown that the relations are of the form $e_1^{\epsilon}e_2^{\epsilon}e_3^{\epsilon}$ for each directed $3$-cycles $(e_1, e_2, e_3)$ of $\triangle_\Gamma$ and $\epsilon = \pm 1$. This result gave an independent and purely algebraic proof that $H_\Gamma$ is finitely presented when $\triangle_\Gamma$ is simply connected.

\begin{thm}[\cite{DL}, Theorem 1]
Let $\triangle_\Gamma$ be connected. The group $H_\Gamma$ has a presentation with generators the set of directed edges of $\Gamma$, and relators all words of the form $e^n_1 e^n_2 \cdots e^n_{\ell}$, where $\ell,n \in \ZZ, n \geqslant 0,\ell \geqslant 2$, and $(e_1, \ldots,e_\ell)$ is a directed cycle in $\Gamma$.  
 
Furthermore, if the flag complex $\triangle_\Gamma$ is simply connected. Then $H_\Gamma$ has the following finite presentation

\begin{equation*}
H_\Gamma = \langle e \in E\Gamma \mid ef = fe, ef = g \text{ if }\triangle(e, f, g) \text{ is a directed triangle } \rangle,
\end{equation*}

where the inclusion $f \colon H_\Gamma \hookrightarrow A_\Gamma$ is given by $f({e}) = a_ia^{-1}_j$ for every edge ${e} = (a_i, a_j)$ of $\Gamma$.
\end{thm}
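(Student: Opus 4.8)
The plan is to prove the Dicks--Leary presentation in two stages, first establishing the general (possibly infinite) presentation with relators coming from all directed cycles, and then specializing to the simply connected case where the relations simplify to the commutator and triangle relations.

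\begin{proof}[Proof sketch]
First I would set up the algebraic framework via the canonical epimorphism $\varphi \colon G_\Gamma \to \ZZ$ and its kernel $H_\Gamma$. The key structural observation is that $H_\Gamma$ carries a natural action making it amenable to the Reidemeister--Schreier rewriting process: choosing the powers of a fixed generator as a Schreier transversal for $H_\Gamma$ in $G_\Gamma$, the coset representatives are indexed by $\ZZ$ (the ``levels'' recorded by $\varphi$), and the Schreier generators naturally correspond to \emph{directed edges} of $\Gamma$. Concretely, for an edge $e=(a_i,a_j)$ one reads off the element $a_i a_j^{-1}$, which indeed lies in $\ker\varphi = H_\Gamma$ since both endpoints map to $1$. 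I would verify that these edge-elements generate $H_\Gamma$ and compute the induced relators from the defining commutator relations of $G_\Gamma$.

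Next I would derive the relators. Applying Reidemeister--Schreier to each relation $[a_i,a_j]=1$ of $G_\Gamma$, and tracking how a directed cycle $(e_1,\dots,e_\ell)$ in $\Gamma$ lifts through the transversal, one obtains relators of the form $e_1^n e_2^n \cdots e_\ell^n$, where $n$ records the level at which the cycle is traversed and $\ell\ge 2$ is the cycle length. The geometric content here is that a directed cycle closes up in $\Gamma$, so the corresponding product of edge-elements returns to the same coset; the exponent $n$ encodes the choice of transversal power. This gives the general presentation with generators the directed edges of $\Gamma$ and relators all such cycle-words, valid whenever $\triangle_\Gamma$ is connected.

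The main obstacle, and the heart of the theorem, is the reduction to a \emph{finite} presentation under the hypothesis that $\triangle_\Gamma$ is simply connected. Here I would argue that simple connectivity of the flag complex lets one eliminate all but finitely many relators: every directed cycle in $\Gamma$ bounds a disc in $\triangle_\Gamma$ that can be filled by the $2$-simplices, i.e.\ the directed triangles, so the long cycle-relators are consequences of the triangle relations together with the commutation relations. Formally, one shows that the relations $ef=fe$ (commutation along an edge of a triangle) and $ef=g$ for each directed triangle $\triangle(e,f,g)$ suffice to derive $e_1^n\cdots e_\ell^n$ for every directed cycle, by induction on the combinatorial area of a filling disc; simple connectivity guarantees such a filling exists and that any two fillings differ by elementary moves across triangles, which is exactly what makes Tietze transformations collapse the infinite relator set to the finite one. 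Finally I would record the inclusion $f\colon H_\Gamma\hookrightarrow A_\Gamma$, checking on generators that $f(e)=a_i a_j^{-1}$ respects the relations, thereby confirming that the finite presentation presents the correct subgroup.
\end{proof}
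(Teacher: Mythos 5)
A preliminary remark: the paper does not prove this statement --- it is imported verbatim from Dicks--Leary \cite{DL} --- so there is no in-paper proof to compare yours against; what follows assesses your sketch on its own terms.

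The second stage of your plan (using simple connectivity of $\triangle_\Gamma$ to fill a directed cycle with $2$-simplices and showing, by induction on the area of the filling, that every relator $e_1^n\cdots e_\ell^n$ follows from the commutation and triangle relations) is the right strategy for the ``furthermore'' clause. The genuine gap is in the first stage. Your claim that ``the Schreier generators naturally correspond to directed edges of $\Gamma$'' is false: with transversal $\{t^k : k\in\ZZ\}$ for a fixed generator $t$, the Reidemeister--Schreier generators of $\ker\varphi$ are the elements $x_{i,k}=t^k a_i t^{-k-1}$, indexed by pairs (vertex, integer) --- infinitely many, one family per vertex and per level --- whereas the directed edges are finite in number; the edge element $a_ia_j^{-1}$ is the product $x_{i,0}x_{j,0}^{-1}$ of two Schreier generators at level $0$. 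Moreover, rewriting the relator $[a_i,a_j]$ over the coset $t^k$ yields $x_{i,k}x_{j,k+1}=x_{j,k}x_{i,k+1}$, not a cycle word. The entire content of the first assertion of the theorem is the passage from this infinite, level-graded presentation to a presentation on the finitely many edge generators with the cycle words as a \emph{complete} set of relators; this elimination uses the connectedness of $\Gamma$ essentially (if $\Gamma$ is disconnected the edge elements do not even generate $\ker\varphi$, as one already sees on first homology), and it is precisely where the relators $e_1^ne_2^n\cdots e_\ell^n$ arise. Your sketch asserts the conclusion of this step rather than performing it. To repair the argument you would need either to carry out explicitly the Tietze transformations eliminating all $x_{i,k}$ with $k\neq 0$, or to follow the Dicks--Leary route: let $E$ be the group with the stated generators and relators, define $E\to G_\Gamma$ by $e\mapsto a_ia_j^{-1}$, check that the image lies in $\ker\varphi$ and generates it, and prove injectivity by reconstructing $G_\Gamma$ as an extension of $\ZZ$ by $E$ (equivalently, by producing a normal form), rather than by rewriting.
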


\begin{figure}[h]
    \centering
    \begin{tikzpicture}
    \tikzset{
    edge/.style={draw=black,postaction={on each segment={mid arrow=black}}}
} 
\node[fill=black!100, state, scale=0.10, vrtx/.style args = {#1/#2}{label=#1:#2}] (A) [vrtx=below/$a_i$]     at (0, 0) {};
\node[fill=black!100, state, scale=0.10, vrtx/.style args = {#1/#2}{label=#1:#2}] (C) [vrtx=above/$a_j$]    at (1, 1) {};
\node[fill=black!100, state, scale=0.10, vrtx/.style args = {#1/#2}{label=#1:#2}] (B) [vrtx=below/$a_k$]     at (2.5, 0) {};

\draw[edge] (A) -- (B) node[midway, below] {$g$};
\draw[edge] (C) -- (B) node[midway, above right] {$f$};
\draw[edge] (A) -- (C) node[midway, above left] {$e$};
    \end{tikzpicture}
    \caption{A directed triangle. 
    }
    \label{directed triangle}
\end{figure}

The Dicks-Leary presentation is not necessarily a minimal presentation, i.e., there are some redundant generators. Dicks-Leary considered all the edges of $\Gamma$ as the generators.
The simpler presentation was given by Papadima-Suciu in ~\cite{PS}. The authors proved that for the generators of $H_\Gamma$ it is enough to consider the edges of a spanning tree of $\Gamma$.

\begin{thm}[{\cite[Corollary 2.3]{PS}}]\label{thm: PS presentation}
If $\triangle_\Gamma$ is simply-connected, then $H_\Gamma$ has a presentation $H_\Gamma = F/R$, where $F$ is the free group generated by the edges of a spanning tree of $\Gamma$, and $R$ is a finitely generated normal subgroup of the commutator group $[F, F]$.
\end{thm}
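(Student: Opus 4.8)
The plan is to derive the statement from the Dicks--Leary presentation together with the inclusion $\iota\colon H_\Gamma \hookrightarrow G_\Gamma$ given by $\iota(e_{ij}) = a_i a_j^{-1}$ for each edge $e_{ij}=(a_i,a_j)$. Fix a spanning tree $T$ of $\Gamma$, root it at a vertex $a_0$, let $F$ be the free group on the $|V\Gamma|-1$ edges of $T$, and let $\pi\colon F \to H_\Gamma$ be the homomorphism sending each tree edge to the corresponding element of $H_\Gamma$. I would then establish three things in turn: that $\pi$ is surjective (so the tree edges really do generate), that $R:=\ker\pi$ is contained in $[F,F]$, and that $R$ is the normal closure of finitely many elements.

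For surjectivity I would use telescoping inside $G_\Gamma$. For each vertex $a_k$, let $w_k\in F$ be the word spelling the tree path $[a_0,a_k]_T$ with edges oriented along the path; then $\pi(w_k)=a_0 a_k^{-1}$ in $G_\Gamma$, since the intermediate factors cancel. For an arbitrary edge $e_{ij}=(a_i,a_j)$ one has $\iota(e_{ij})=a_i a_j^{-1}=(a_0 a_i^{-1})^{-1}(a_0 a_j^{-1})=\pi(w_i^{-1}w_j)$, so $e_{ij}=\pi(w_i^{-1}w_j)$ lies in $\im\pi$ (the identity of elements in $G_\Gamma$ transports back to $H_\Gamma$ because $\iota$ is injective). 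Since $\triangle_\Gamma$ is simply connected, hence connected, the theorem of Dicks--Leary (\cite{DL}) guarantees that the edges $e_{ij}$ generate $H_\Gamma$; as each is a product of tree edges, $\pi$ is onto.

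For the commutator containment I would exploit injectivity of $\iota$ once more: since $\iota$ is injective, $R=\ker\pi=\ker(\iota\circ\pi)$ is exactly the kernel of the map $F\to G_\Gamma$. On abelianisations this map sends each tree edge $(a,b)$ to $a-b\in \ZZ^{V\Gamma}=G_\Gamma^{\mathrm{ab}}$, and the $|V\Gamma|-1$ vectors $\{a-b \mid (a,b)\in ET\}$ are linearly independent by the standard spanning-tree argument. Hence $F^{\mathrm{ab}}\to G_\Gamma^{\mathrm{ab}}$ is injective, which forces $R\subseteq [F,F]$. Finally, simple connectivity of $\triangle_\Gamma$ gives that $H_\Gamma$ is finitely presented (\cite{BB}, \cite{DL}); since $F$ has finite rank and surjects onto the finitely presented group $H_\Gamma$, the kernel $R$ is normally generated by finitely many elements, which is the sense of ``finitely generated'' compatible with a finite presentation.

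The genuinely load-bearing input is the hypothesis that $\triangle_\Gamma$ is simply connected, and it enters only twice: connectivity is what licenses the Dicks--Leary generating set of edges (used for surjectivity of $\pi$), and simple connectivity is what makes $H_\Gamma$ finitely presented (used for normal finite generation of $R$). Everything else --- the telescoping identity and the linear independence of tree differences --- is routine free-group and linear-algebra bookkeeping. I therefore expect the main obstacle to be purely expository: correctly quoting the two structural facts about $H_\Gamma$ and checking that the inclusion $\iota$ transports identities between $G_\Gamma$ and $H_\Gamma$, rather than any hard estimate or delicate construction.
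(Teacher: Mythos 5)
The paper does not prove this statement; it is quoted verbatim from \cite[Corollary 2.3]{PS}, so there is no internal proof to compare against. Your argument is correct and follows essentially the same route as Papadima--Suciu's own derivation: start from the Dicks--Leary generating set, use the telescoping identity $\iota(e_{ij})=a_ia_j^{-1}$ along tree paths to eliminate non-tree edges, deduce $R\subseteq[F,F]$ from injectivity of $F^{\mathrm{ab}}\to G_\Gamma^{\mathrm{ab}}$ on the tree-edge differences, and invoke finite presentability (via simple connectivity of $\triangle_\Gamma$) to get that $R$ is the normal closure of finitely many elements --- which is indeed the intended reading of ``finitely generated normal subgroup'' here.
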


Here is a couple of first examples of Bestvina--Brady groups. 

\begin{exam}
If $\Gamma$ is a complete graph on $n$ vertices, then any spanning tree has $n-1$ edges. Moreover, any two edges in the spanning tree form the two sides of a triangle in $\Gamma$. 
The corresponding Bestvina--Brady group has $n-1$ generators and any two of them commute, hence it is $\ZZ^{n-1}$. 
\end{exam}

\begin{exam}\label{free gp BB}
Now let $\Gamma$ be a tree on $n$ vertices.
The spanning tree is the graph $\Gamma$ itself and there are no triangles. 
The corresponding Bestvina--Brady group has $n-1$ generators and none of them commute, hence it is $F_{n-1}$ the free group on $n-1$ generators.
\end{exam}

\section{Main results}\label{sec: main}

Given a presentation $G = \langle X \mid R \rangle$ of a group there is a standard way of constructing a cell complex with fundamental group $G$, and the cell complex is called the \emph{presentation $2$-complex}. This has one vertex, an edge for each generator $x \in X$ and a $2$-cell for each relator $r\in R$. We record the following remark from~\cite{N. D. Gilbert, Harlander-Rosebrock} regarding the integral homology of the presentation $2$-complex of a WLOG group $G$. 

\begin{rem}\label{rem: classifying space}
  We fix the WLOG $\Gamma$. Let $K(\Gamma)$ be the presentation $2$-complex $K(\Gamma)$ of the WLOG group $G(\Gamma)$. Note that the each relator $o(e)\lambda(e) = \lambda(e)t(e)$ of the WLOG group $G(\Gamma)$, i.e., each relator of $\mathcal{P}(\Gamma)$, can be replaced by $o(e) = t(e)$ without altering the homology type of $K(\Gamma)$. The resulting 2-complex is the suspension of $(\Gamma \cup \{\text{point}\})$. Hence, $K(\Gamma)$ has the same integral homology as the suspension of $(\Gamma \cup \{\text{point}\})$.  
\end{rem}

In~\cite{Harlander-Rosebrock}, Harlander--Rosebrock discussed the asphericity of $K(\Gamma)$ and aspherical cyclically presented groups.

Let $G$ be an arbitrary group given by a free presentation $G= F/R$. For $g \in G$, let $C_G(g)$
denotes the centralizer of $g$ in $G$. Then using Moravec's formula to identify $\widetilde{B}_0(G)$ as $(\gamma_2(F) \cap R)/\langle K(F) \cap R \rangle$, a group homomorphism can be obtained,

\[
\begin{array}{cl}
     \Psi_g : & C_G(g) \rightarrow \widetilde{B}_0(G)\\
              & a  \mapsto [\tilde{a}, \tilde{g}] \, \langle K(F) \cap R \rangle,
\end{array}
\]

where $\tilde{a}, \tilde{g}$ are the preimages in $F$ of the elements $a$ and $g$ of $G$.

For any subset $X \subseteq G$, one obtains a homomorphism

\[
\begin{array}{cl}
   \Psi_X :  & *_{x \in X} \, C_G(x) \, \, \rightarrow \, \, \widetilde{B}_0(G), \\
             & (a_x)_{x\in X} \, \, \mapsto \, \, \Pi_{x\in X} \, (\Psi_X \, (a_x) \, ).
\end{array}
\]

In a similar manner, one can obtain a group homomorphism
$\psi_g \colon C_G(g) \rightarrow H_2(G)$ defined by $\psi_g(a) = [\tilde{a}, \tilde{g}][R, F]$ and $\psi_X \colon *_{x \in X} C_G(x) \rightarrow H_2(G)$ as defined above.

Now we prove one of our main results on the computation of the generators of $\widetilde{B_0}(G)$, where $G$ is a WLOG group. Our result is analogous to \cite[Thm.~2.2]{N. D. Gilbert} regarding $H_2(G)$.


\begin{thm}\label{thm: multi. vs. log}

\begin{enumerate}

\item [(a)] Let $G$ be a group of weight $m$ whose abelianisation $G^{\text{ab}}$ is a free abelian of rank $m$, and let $M$ be a weight set in $G$. Let $\varphi : C \rightarrow G$ be
the free crossed G-module on the inclusion $\iota: M \hookrightarrow G$. Then $C$ is a WLOG group and the kernel of $\varphi$, $\ker \varphi$ is isomorphic to $\widetilde{B}_0(G)$.

\item [(b)] Suppose $\Gamma$ is a finite WLOG with $n$ connected components and that $G = G(\Gamma)$. Let $S = \{v_1, \ldots v_n \}$ be a choice of vertices, one from each component of $\Gamma$. Then
the homomorphism $\Psi_S$ is surjective. Moreover, $\widetilde{B}_0(G)$ is generated by the $\Psi_S$ images of the elements $\lambda_i(g)$, where $g$ runs through a basis of the free group $*^n_{i=1} \pi_1(\Gamma, v_i)$.
\end{enumerate}
\end{thm}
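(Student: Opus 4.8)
The plan is to establish part (a) as a structural recognition statement and then obtain part (b) by specializing it to $G = G(\Gamma)$ and translating the resulting kernel into the combinatorics of the graph $\Gamma$. Throughout I would lean on Ratcliffe's Theorem~\ref{thm: Ratcliffe} to recognize free crossed modules, on Moravec's Hopf-type formula~\eqref{equ: Hopf type} and five-term sequence~\eqref{equ: five term} to compute $\widetilde{B}_0$, and on Remark~\ref{rem: classifying space} to control abelianizations.

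\emph{Part (a).} First I would record the effect of the weight hypotheses: since $M$ is a weight set, $N := \im\varphi = \langle\!\langle \iota(M)\rangle\!\rangle_G = G$ and hence $Q := G/N = 1$, so $\varphi : C \to G$ is a surjection with central kernel, i.e.\ a central extension $1 \to \ker\varphi \to C \to G \to 1$. To see that $C$ is a WLOG group I would build an explicit WLOG $\Gamma'$ realizing the free crossed module: its vertices record the $G$-translates of $M$ (so that, since $C^{\mathrm{ab}}$ is free abelian of rank $m$ on $M$ by Ratcliffe~(i), $\Gamma'$ has exactly $m$ components), with edges carrying the labels that implement the $G$-action, arranged so that the WLOG relations $o(e)\lambda(e) = \lambda(e)t(e)$ are precisely the Peiffer relations. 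Verifying Ratcliffe's three conditions for $G(\Gamma') \to G$ — (i) via the abelianization of Remark~\ref{rem: classifying space}, (ii) by construction, (iii) triviality of the induced map on $H_2$ — identifies $G(\Gamma')$ with $C$ by uniqueness of free crossed modules. For the kernel I would pass to a free presentation $G = F/R$ adapted to $M$ and compare with $\widetilde{B}_0(G) \cong (\gamma_2(F)\cap R)/\langle K(F)\cap R\rangle$: the central extension realizes $\ker\varphi$ as a quotient of $\gamma_2(F)\cap R$, the key point being that the Peiffer relations attached to \emph{commuting} pairs of vertices account for exactly the subgroup $\langle K(F)\cap R\rangle$. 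Equivalently, feeding the extension into~\eqref{equ: five term} with $N = \ker\varphi$ collapses — because $C^{\mathrm{ab}} \to G^{\mathrm{ab}}$ is an isomorphism — to an identification of $\ker\varphi$ with $\widetilde{B}_0(G)$.

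\emph{Part (b).} Here I would apply part (a) to $G = G(\Gamma)$ itself. By Remark~\ref{rem: classifying space}, $G(\Gamma)^{\mathrm{ab}} \cong \ZZ^n$ with $n = |\pi_0(\Gamma)|$, and the choice $S = \{v_1,\dots,v_n\}$ of one vertex per component is a weight set, so the hypotheses of (a) hold with $m = n$. Consequently $\widetilde{B}_0(G) \cong \ker\varphi$ for the free crossed module $\varphi : C \to G$ on $S$, where $C$ is again a WLOG group built from $\Gamma$. The remaining task is to read the generators of $\ker\varphi$ off $\Gamma$: a closed edge-path $g$ in the $i$-th component based at $v_i$, read through the labelling, produces a word $\lambda_i(g)$ whose conjugation action fixes $v_i$ in $G$, so $\lambda_i(g) \in C_G(v_i)$. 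The identities of the free crossed module are generated by these loop contributions, since the defining relations of $C$ are indexed by the edges ($1$-cells) of $\Gamma$ and the relations among them come precisely from the cycles. As $\pi_1(\Gamma,v_i)$ is free on the cycles of the $i$-th component, a basis of $\ast_{i=1}^n \pi_1(\Gamma,v_i)$ yields, under $\Psi_S$, elements $\Psi_{v_i}(\lambda_i(g)) = [\widetilde{\lambda_i(g)},\tilde v_i]\,\langle K(F)\cap R\rangle$ generating $\ker\varphi \cong \widetilde{B}_0(G)$, which simultaneously gives surjectivity of $\Psi_S$.

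The main obstacle I anticipate is the precise identification of $\ker\varphi$ with $\widetilde{B}_0(G)$ rather than with $H_2(G)$: the free crossed module on a weight set computes the \emph{full} Schur multiplier (this is the content of the $H_2$-analogue due to Gilbert), so one must carefully track the contribution of commuting pairs and verify that $\langle K(F)\cap R\rangle$ — equivalently $M_0(G)$ under the isomorphism $M(G)\cong H_2(G)$ — is exactly what gets quotiented out. The second delicate point, in part (b), is showing that the cycles of $\Gamma$ capture all identities with no hidden relations, which is where the freeness supplied by Ratcliffe's Theorem~\ref{thm: Ratcliffe} and the spanning-tree decomposition of each component are essential.
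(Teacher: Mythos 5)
Your overall strategy for part (a) coincides with the paper's: present the free crossed module $C$ by generators $M\times G$ and Peiffer-type relations, read that presentation as a WLOG, invoke Ratcliffe's Theorem~\ref{thm: Ratcliffe} to get that $\varphi$ is onto with $C^{\mathrm{ab}}\to G^{\mathrm{ab}}$ an isomorphism and $\varphi_*\colon H_2(C)\to H_2(G)$ trivial, and then feed the central extension $\ker\varphi\to C\to G$ into Moravec's sequence~\eqref{equ: five term}. But the sequence does not ``collapse to an identification of $\ker\varphi$ with $\widetilde{B}_0(G)$'' from those facts alone: its middle term is $\ker\varphi/\langle K(C)\cap\ker\varphi\rangle$, and the entire content of the statement --- as opposed to Gilbert's $H_2$-analogue, where the corresponding term is $\ker\varphi/[\ker\varphi,C]=\ker\varphi$ by centrality --- is the triviality of $\langle K(C)\cap\ker\varphi\rangle$. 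You flag exactly this as ``the main obstacle I anticipate'' and then do not resolve it; the paper does address this step explicitly (arguing that $\langle K(C)\cap\ker\varphi\rangle$ vanishes because $\ker\varphi$ is central). Since this is precisely the point that distinguishes $\widetilde{B}_0(G)$ from $H_2(G)$, leaving it open leaves part (a) unproved.

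For part (b) you take a genuinely different route from the paper: you apply (a) to $G=G(\Gamma)$ with $S$ as weight set and then try to read $\ker\varphi$ of the free crossed module off the graph. The reduction itself is legitimate ($S$ is a weight set and $G^{\mathrm{ab}}\cong\ZZ^{n}$), but the second half is asserted rather than proved: the free crossed module on $S$ is presented on $S\times G$, not on the edges of $\Gamma$, so the claim that ``the defining relations of $C$ are indexed by the edges of $\Gamma$ and the relations among them come precisely from the cycles'' conflates two different objects and skips the actual computation of the identities. The paper never invokes (a) here; it writes $G=\mathcal{G}/N$ with $\mathcal{G}$ the WLOG group on a spanning forest $\mathcal{T}$, shows $N$ is normally generated by the elements $[\lambda_i(x_{ij}),v_i]$ coming from a free basis of each $\pi_1(\Gamma,v_i)$, uses Remark~\ref{rem: classifying space} to get $H_2(\mathcal{G})=0$ (hence $\widetilde{B}_0(\mathcal{G})=0$) and $H_1(\mathcal{G})\cong H_1(G)$, and concludes from~\eqref{equ: five term} that $\widetilde{B}_0(G)\cong N/\langle K(\mathcal{G})\cap N\rangle$, onto which the map $\nu$ induced by the $\lambda_i$ visibly surjects; surjectivity of $\Psi_S$ then follows because $\nu$ factors through it. Some version of this spanning-forest argument (or an honest computation of the kernel of the free crossed module on $S$) is needed before your generation and surjectivity claims are justified.
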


\begin{proof} (a) Let $F$ be the free group on $(M \times G)$. Then $C$ is isomorphic to the group $F/\left\langle \! \left\langle R \right\rangle \! \right\rangle_F$,

\[
R = \{\, (m, g)^{-1}\, (m', g')\, (m, g)\, (m', g'g^{-1}\,\iota(x)\,g)^{-1}\, \, | \, \, m, m' \, \in M, \, g, g' \, \in G \}.
\]

Let $\Gamma$ be the WLOG with $V\Gamma = M \times G$; and an edge from $(m', g')$ to $(m', g'g^{-1}\,\iota(x)\,g)$ labelled by $(m, g)$ for all $m, m' \, \in M$ and for all $g, g' \, \in G$. Then it is straightforward that $C$ acquires a WLOG group structure with $\Gamma$ as the underlying WLOG. 

From the definition of the weight set, $\left\langle \! \left\langle M \right\rangle \! \right\rangle_G =G$. Since, $\varphi : C \rightarrow G$ be the free crossed G-module on the inclusion $\iota : M \hookrightarrow G$. Applying Theorem~\ref{thm: Ratcliffe}, $N = \im \varphi =G$. Thus $\varphi$ is surjective. Also, from the same Theorem~\ref{thm: Ratcliffe}, $\varphi_* : H_2(C) \rightarrow H_2(G)$ induced from $\varphi$ is trivial and $\varphi$ induces an isomorphism between the abelianizations  of $C$ and $G$.
Now we apply Moravec's five term exact sequence regarding Bogomolov multiplier for the extension $\ker \varphi \rightarrow C \rightarrow G$. Thus we get the following:

\[
\widetilde{B}_0(C) \xrightarrow {\,\widetilde{\varphi_*}\,} \widetilde{B}_0(G) \xrightarrow {\,\mu\,}\ker\varphi/\langle K(C) \cap \ker\varphi \rangle \rightarrow H_1(C) \xrightarrow {\,\nu\,} H_1(G) \rightarrow 0
\]

Since the map $\varphi_* : H_2(C) \rightarrow H_2(G)$ is trivial, the map $\widetilde{\varphi_*} \colon \widetilde{B}_0(C) \rightarrow \widetilde{B}_0(G)$ is also trivial. Hence we have $\ker \mu$ is trivial. Also note that $\langle K(C)\, \cap \, \ker\varphi \rangle$ is trivial as $\ker \varphi$ is central.
The only remaining part is to prove that $\mu$ is onto and it follows directly from the fact that $\nu$ is an isomorphism. Thus we have the announced isomorphism.

(b) First we choose a spanning tree $T_i$ from each connected component of $\Gamma$ for $i = 1, 2, \ldots, n$. The union of the chosen spanning trees is then a spanning forest $\mathcal{T}$ in $\Gamma$. Let $\mathcal{G}$ be the WLOG group defined on the spanning forest $\mathcal{T}$. Then $G= \mathcal{G}/ N$, where $N$ is the normal closure of the set $R_\mathcal{T}$ in $\mathcal{G}$,

\begin{equation}\label{equ: for N}
   R_\mathcal{T} = \{ o(e) \, \lambda(e) \, t(e)^{-1} \, \lambda(e)^{-1} \, \, | \, \, e \in E\Gamma\setminus E\mathcal{T}\}.
\end{equation}

Now we consider the fundamental group of each connected component of $\Gamma$. In other words, $\pi_1(\Gamma, v_i)$ for each $v_i \in S$. It is obvious that the labelling function $\lambda \colon E\Gamma \rightarrow V^{*}$ induces a group homomorphism $\lambda_i \colon \pi_1(\Gamma, v_i) \rightarrow \mathcal{G}$. We claim that $[\lambda_i(g_i), v_i] \in N$ for all $g_i \in \pi_1(\Gamma, v_i)$. Let $g_i$ be an arbitrary element of $\pi_1(\Gamma, v_i)$. Then, $\lambda_i(g_i) = g_1 \lambda_i(e) {g_2}^{-1}$, where $e \in E\Gamma \setminus ET_i$, $g_1$ is the $\lambda_i$-image of the edge path $[v_i, o(e)]_{T_i}$ along the spanning tree $T_i$ and $g_2$ is the $\lambda_i$-image of the edge path $[v_i, t(e)]_{T_i}$ along the spanning tree $T_i$. Now applying the relations of the presentation of $\mathcal{P}(\Gamma)$ for the edges $e \in ET_i$, we have $[\lambda_i(g_i), v_i] = \lambda_i(g_i)^{-1} v_i^{-1} \lambda_i(g_i) v_i = g_2 \lambda_i(e)^{-1} o(e)^{-1} \lambda_i(e) t(e) g_2^{-1} \in N$. Thus we establish our claim and which also implies that
$\lambda_i(g_i)N \in C_G(v_i)$. Let us consider $\{ x_{i,1}, \ldots, x_{i,m_i}\}$ as the free basis for the free group $\pi_1(\Gamma, v_i) \cong F_{m_i}$ for each $i = 1, 2, \ldots n$. Also we note that as $[\lambda_i(g_i), v_i] \in N$, modulo the relations of $\mathcal{G}$, we may replace the set $R_{\mathcal{T}}$ in~\eqref{equ: for N} by the following set,

\[
X_\mathcal{T} = \bigcup_{i=1}^{n} \{ [\lambda_i(x_{ij}), v_i], 1 \leq j \leq m_i \}.
\]

Thus we have $N = \left\langle \! \left\langle X_\mathcal{T} \right\rangle \! \right\rangle_\mathcal{G}$. Let us call $\langle K(\mathcal{G}) \cap N \rangle = H$. Now we consider the following two homomorphisms $\mu_i \colon  \pi_1(\Gamma, v_i) \rightarrow C_G(v_i)$ and $\nu_i \colon  \pi_1(\Gamma, v_i) \rightarrow N/H$ induced by the homomorphism $\lambda_i$,

\[
\begin{array}{lr}
\begin{array}{cc}
   \mu_i \colon & \pi_1(\Gamma, v_i) \rightarrow C_G(v_i)  \\
     & g_i \mapsto \lambda(g_i)N
\end{array}    
&  
\begin{array}{cc}
  \nu_i \colon   &  \pi_1(\Gamma, v_i) \rightarrow N/H\\
     & g_i \mapsto [\lambda_i(g_i), v_i] H.
\end{array}
\end{array}
\]

Then, $\mu_i$ and $\nu_i$ extend to $\mu \colon  *^n_{i=1}\pi_1(\Gamma, v_i) \rightarrow *^n_{i=1}C_G(v_i)$ and $\nu \colon  *^n_{i=1}\pi_1(\Gamma, v_i) \rightarrow N/H$ respectively. As, $N = \left\langle \! \left\langle X_\mathcal{T} \right\rangle \! \right\rangle_\mathcal{G}$, it is straightforward that $\nu$ is surjective.
Applying Moravec's five-term exact sequence 

\[
\widetilde{B}_0(\mathcal{G}) {\rightarrow} \widetilde{B}_0(G) \xrightarrow{\, f\, } N/H \rightarrow H_1(\mathcal{G}) {\rightarrow} H_1(G) \rightarrow 0,
\]

of the group extension $N \rightarrow \mathcal{G} \rightarrow G$, we will prove that $\widetilde{B}_0(G) \cong N/H$. Note that $H_1(\mathcal{G}) \cong H_1(G)$. This follows from the fact that the relation $o(e)\lambda(e)=\lambda(e)t(e)$ becomes $o(e)=t(e)$ in $\mathcal{G}^{\text{ab}}$. Thus all the vertices of one connected component get identified and so $\mathcal{G}^{\text{ab}} \cong \ZZ^n$, $n$ is the number of connected components in $\Gamma$. Similarly, we have $G^{\text{ab}}\cong \ZZ^n$. Following the same argument and Remark~\ref{rem: classifying space} we can assume the resulting complex as the bouquet of $n$ circles labelled by $v_i$'s where each $v_i$ is representing each connected component of $\Gamma$ without alterning the homology type of presentation $2$-complex $K(\mathcal{T})$ of $\mathcal{G}$. Hence, $H_2(\mathcal{G})$ is a homomorphic image of $H_2(K(\mathcal{T})) =0$ and $\widetilde{B}_0(\mathcal{G}) = M(\mathcal{G})/M_0(\mathcal{G}) \cong H_2(\mathcal{G})/M_0(\mathcal{G})$ (see Section~\ref{sec: bog mul}). So, we have $\ker f$ is trivial, i.e., $f$ is injective. On the other hand, as, $H_1(\mathcal{G}) \cong H_1(G)$, from the five-term exact sequence we get $f$ is onto as well which in turn implies that $\widetilde{B}_0(G) \cong N/\langle K(\mathcal{G}) \cap N \rangle$. 
We have already showed $\nu$ is surjective, and $\nu$ factors through $\Psi_S$. Hence, $\Psi_S$ is also surjective and in particular $\widetilde{B_0}(G)$ is generated by the $\Psi_S$ images of the elements $\lambda_i(g)$, where $g$ runs through a basis of the free group $*^n_{i=1} \pi_1(\Gamma, v_i)$.
This completes the proof.
\end{proof}





\subsection{Bestvina--Brady group as WLOG groups.}\label{sec: bb wlog}

In~\cite{N. D. Gilbert}, N. D. Gilbert gave the WLOG presentation for the right-angled Artin groups and the Braid groups. And then by applying Theorem~\ref{thm: Ratcliffe} N. D. Gilbert gave an explicit description of the Schur multiplier of the group $G$, where $G$ could be a RAAG or a Braid group (see~\cite[Thm.~2.2(b)]{N. D. Gilbert}).
This result serves as a motivation to explore the same in the family of Bestvina--Brady groups and Artin groups. In the present article, we also extend the result for the computation of the Bogomolov multiplier for the aforementioned families of groups, which also include right-angled Artin groups.

\begin{defn}\label{complement}
Let $\Gamma$ be a connected graph and $\triangle$ be a triangle of $\Gamma$. Let $\Gamma'$ be the graph with $V\Gamma'=V\Gamma$ and $E\Gamma'=E\Gamma\setminus E\triangle$. Let $S$ be the isolated vertices of $\Gamma'$ and $V^c=V\Gamma \setminus S$. The \emph{edge-set ~complement} of $\triangle$ is the induced subgraph of $\Gamma$ generated by the set of vertices $V^c$. We will denote the edge-set complement of $\triangle$ in $\Gamma$ by $\triangle_\Gamma^c$.

A triangle $\triangle$ of $\Gamma$ is said to be an \emph{internal triangle} if its intersection with the edge-set complement $\triangle_\Gamma^c$ is neither one vertex nor one edge.
A triangle $\triangle$ of $\Gamma$ is said to be a \emph{strictly internal triangle} if it is contained in an induced $K_n$, $n \geq 4$.
\end{defn}
In Figure~\ref{fig:graph with int. tri.} the triangles $\triangle(a_2, a_4, a_5)$ and $\triangle(a_2, a_3, a_4)$ are the internal triangles, while $\triangle(a_3, a_4, a_7)$ is \emph{not} an internal triangle.
\begin{figure}[H]
    $$
    \begin{array}{ccc}
\begin{tikzpicture}[shorten >=1pt,node distance=17.5cm,auto]
\tikzset{
    edge/.style={draw=black,postaction={on each segment={mid arrow=black}}}
}
\node[fill=black!100, state, scale=0.10, vrtx/.style args = {#1/#2}{label=#1:#2}] (1) [vrtx=left/$a_6$] {};

\node[fill=black!100, state, scale=0.10, vrtx/.style args = {#1/#2}{label=#1:#2}] (2) [vrtx=above/$a_4$] [ below right of = 1] {};

\node[fill=black!100, state, scale=0.10, vrtx/.style args = {#1/#2}{label=#1:#2}] (7) [vrtx=right/$a_7$] [right of = 2] {};

\node[fill=black!100, state, scale=0.10, vrtx/.style args = {#1/#2}{label=#1:#2}] (3) [vrtx=left/$a_5$] [ below left of = 1] {};

\node[fill=black!100, state, scale=0.10, vrtx/.style args = {#1/#2}{label=#1:#2}] (4) [vrtx=right/$a_3$] [ below right of = 2] {};

\node[fill=black!100, state, scale=0.10, vrtx/.style args = {#1/#2}{label=#1:#2}] (5) [vrtx=below/$a_2$] [ below left of = 2] {};

\node[fill=black!100, state, scale=0.10, vrtx/.style args = {#1/#2}{label=#1:#2}] (6) [vrtx=left/$a_1$] [ below left of = 3] {};
\draw[edge] (5) -- (6) node[midway, below] {};
\draw[edge] (5) -- (3);
\draw[edge] (5) -- (2);
\draw[edge] (5) -- (4);
\draw[edge] (1) -- (2) node[midway, right] {};
\draw[edge] (6) -- (3) node[midway, left] {};
\draw[edge] (3) -- (2);
\draw[edge] (3) -- (1) node[midway, left] {};
\draw[edge] (2) -- (4);
\draw[edge] (2) -- (7) node[midway, above] {};
\draw[edge] (7) -- (4) node[midway, right] {};
\end{tikzpicture}
  \end{array}
  $$
\caption{A graph with internal triangles.}
\label{fig:graph with int. tri.}
\end{figure}

It is straightforward to note that all strictly internal triangles are internal.

\begin{defn}
Let $T$ be a spanning tree of $\Gamma$. 
A triangle $\triangle$ of $\Gamma$ is a \emph{favourable triangle}  with respect to $T$ if, either it has exactly $2$ edges in $ET$ or it is strictly internal.  
Otherwise, we say that $\triangle$ is \emph{unfavourable}.
\end{defn}

\begin{nt}
A graph $\Gamma$ can have several spanning trees. 
So, it may very well happen that a triangle $\triangle$ is \emph{favourable} in one spanning tree $T_1$ of $\Gamma$ but not favourable in another spanning tree $T_2$ of $\Gamma$. 
Hence to discuss favourable triangles we have to fix a spanning tree $T$ of $\Gamma$. 
We choose a spanning tree  with the \emph{maximal} number of \emph{favourable} triangles or equivalently, the number of \emph{unfavourable} triangles is \emph{minimal}.
\end{nt}

\begin{defn}
    A spanning tree $T$ of $\Gamma$ is said to be a favourable spanning tree if $\Gamma$ has the minimal number of unfavourable triangles with respect to $T$.
\end{defn}

\begin{thm}\label{thm: bb grp LOG}
  Let $H_\Gamma$ be a finitely presented Bestvina--Brady group. Then the WLOG group presentation of $H_\Gamma$ is algorithmically computable. 
\end{thm}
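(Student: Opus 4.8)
The plan is to manufacture the desired WLOG from the finite presentation of $H_\Gamma$ supplied by Dicks--Leary and sharpened by Papadima--Suciu (Theorem~\ref{thm: PS presentation}), and then to rewrite every defining relator in the conjugacy shape $o(e')\lambda(e')=\lambda(e')t(e')$. Since $H_\Gamma$ is finitely presented, $\triangle_\Gamma$ is simply connected, so by Papadima--Suciu we may take as generators the edges of a spanning tree $T$ of $\Gamma$, with the relators read off the directed triangles of $\triangle_\Gamma$ under the embedding $e=(a_i,a_j)\mapsto a_ia_j^{-1}$ into $G_\Gamma$. As $\Gamma$ is finite it has only finitely many spanning trees and finitely many triangles, and favourability of a triangle with respect to a given tree is decidable by inspection; hence a \emph{favourable} spanning tree exists and is computable by enumerating all spanning trees and minimising the number of unfavourable triangles. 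I fix such a $T$ once and for all.

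The next step is the explicit rewriting. Each non-tree edge $h=(a_p,a_q)$ equals, as an element of $H_\Gamma$, the word in tree generators read along the unique tree geodesic $[a_p,a_q]_T$; this expression is algorithmic. Substituting these words into the relator of each directed triangle $\triangle(e,f,g)$ (which reads $ef=g$ together with the induced commutations) and sorting triangles by the number of their edges lying in $ET$, three regimes appear. A triangle with exactly two edges in $ET$ collapses to a single commutation $[e,f]=1$ between its two tree generators, and this is already a WLOG relation---a loop at the vertex $e$ carrying the one-letter label $f$. A triangle carrying a single tree edge $e$, positioned so that the relevant tree geodesics route through $e$, likewise collapses to a commutation $[e,w]=1$ with $w$ a word in tree generators, again a loop at $e$ now labelled by the \emph{word} $w$. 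Finally a \emph{strictly internal} triangle sits inside an induced $K_n$ with $n\ge 4$, whose Bestvina--Brady group is the free abelian $\ZZ^{n-1}$; its relator is therefore a consequence of the commutations coming from the surrounding clique and may be discarded.

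Having dealt with the favourable and strictly internal triangles, I would assemble the candidate WLOG $\Gamma'$: its vertices are the edges of $T$, and its oriented, word-labelled edges are precisely the surviving conjugacy relations produced above. Correctness amounts to checking that the presentation $\mathcal{P}(\Gamma')$ and the Papadima--Suciu presentation are Tietze-equivalent, which is immediate because every manipulation used---substituting tree-geodesic words for non-tree edges and deleting redundant clique relators---is a Tietze transformation; algorithmicity then follows from the finiteness of every enumeration involved.

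The main obstacle is the class of genuinely \emph{unfavourable} triangles, those carrying at most one tree edge and failing to be strictly internal, for which the substituted relator is a word equation among several tree generators that need not present itself in the form $o(e')\lambda(e')=\lambda(e')t(e')$. The crux is to show that for a favourable $T$ every such relator can nevertheless be brought into a single conjugacy relation, or is implied by the relations already recorded. This is exactly what the bookkeeping of \emph{internal} versus \emph{strictly internal} triangles is designed to control: the minimality of the unfavourable count, together with the way a non-internal triangle meets the edge-set complement $\triangle_\Gamma^c$ in a single vertex or a single edge, should pin down the tree geodesics tightly enough to force the collapse. The remaining verification---that the assembled WLOG group is neither larger nor smaller than $H_\Gamma$---is where the essential work lies.
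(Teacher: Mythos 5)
Your overall strategy---fix a favourable spanning tree $T$, substitute the tree-geodesic word $w(h)$ for each non-tree edge $h$, and convert each triangle's commutation into a loop of the WLOG---is the same as the paper's. But you have left a genuine gap exactly where the paper does its distinctive work, and you say so yourself: triangles with \emph{no} edge in $ET$ (and not strictly internal). For such a triangle the substituted relator is $[w(e),w(f)]=1$ with \emph{both} entries proper words in the tree generators, and this is not of the shape $o(e')\lambda(e')=\lambda(e')t(e')$, because the source and target of a WLOG edge must be single letters of the vertex alphabet, not words. The paper's resolution is to enlarge the vertex set of the WLOG $\widetilde{\Gamma}$: for each unfavourable triangle $\triangle$ having $0$ edges in $ET$ it adjoins one new vertex, named $w(e)$ for a chosen $e\in E\triangle$, and attaches the loop labelled $w(f)$ at that new vertex (these extra generators are later seen to be redundant, since each equals a word in the tree generators). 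Without this device, or some substitute for it, your construction does not terminate in a WLOG presentation, and the ``essential work'' you defer is precisely this step.

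Two smaller points. First, your hedge in the one-tree-edge case (``positioned so that the relevant tree geodesics route through $e$'') is unnecessary: the relator is $[e,w(f)]=1$ regardless of where the geodesic for $f$ runs, and that is already a loop at the vertex $e$ labelled by the word $w(f)$, which is exactly what the paper records. Second, your proposal to \emph{discard} strictly internal triangles because the Bestvina--Brady group of $K_n$ is $\ZZ^{n-1}$ needs justification: the spanning tree $T$ of $\Gamma$ need not restrict to a spanning tree of the induced clique, so the edges of such a triangle need not be words in a commuting family of tree generators, and redundancy of its relator is not automatic. The paper does not discard these relators; it only uses strict internality in the bookkeeping that defines favourable trees, and still records a loop for every triangle.
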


\begin{proof}
We choose a spanning tree of $\Gamma$ with the number of unfavourable triangles is minimal. Thus let $T$ be the chosen favourable spanning tree of $\Gamma$ and $e$ be an arbitrary edge of $\Gamma$. Then there are two obvious cases--- case-1: $e \in ET$ and case-2: $e \notin ET$ and each case has the following three possibilities:

\begin{itemize}
    \item[(i)] $e$ is not an edge of any triangle of $\Gamma$;
    \item[(ii)] $e$ is an edge of a single triangle $\triangle_1$ of $\Gamma$, i.e., $e \in E\triangle_1$;
    \item[(iii)] $e$ is a common edge of two adjacent triangles $\triangle_1, \triangle_2$ of $\Gamma$, i.e., $e \in E\triangle_1 \cap E\triangle_2$.
\end{itemize}

Now we construct a WLOG $\widetilde{\Gamma}$ such that $G(\widetilde{\Gamma}) \cong H_\Gamma$. The set of vertices of $\widetilde{\Gamma}$, i.e., $V\widetilde{\Gamma}$ is the edges of the spanning tree $T$ and exactly one vertex, namely $w(e)$, for each unfavourable triangle $\triangle$ (w.r.t $T$) which has $0$ edge in $ET$ and $e \in E\triangle$. For any arbitrary edge $e \in E\Gamma \setminus ET$, note that $w(e)$ is the word expression  of $e$ with respect to the generating set $ET$ of $H_\Gamma$ (see Theorem~\ref{thm: PS presentation}).

If we are in~\emph{case 1-(i)}, $e$ would be an isolated vertex.

For~\emph{case 1-(ii)}, let $\triangle_1$ be a favourable triangle with respect to $T$. Then $\triangle_1$ has two edges in $ET$, say, $e \text{ and }f$. We put a loop at $e$ labelled by $f$. Otherwise, if $\triangle_1$ is not a favourable triangle, we express the second edge, $f$ of $\triangle_1$ as a word in $ET$, say $w(f)$. This is always possible due to Theorem~\ref{thm: PS presentation}. And, we put a loop at $e$ labelled by $w(f)$.

Finally, for~\emph{case 1-(iii)}, there are following cases:

\begin{itemize}
    \item[(a)] $\triangle_1$ has exactly one edge (i.e., $e$) in $T$.
    \item[(b)] $\triangle_1$ has two edges (say, $e$, $f$) in $T$.
    \item[(a')] $\triangle_2$ has exactly one edge (i.e., $e$) in $T$.
    \item[(b')] $\triangle_2$ has two edges (say, $e$, $f'$) in $T$.
\end{itemize}

For $(a), (a')$: let $f \in E\triangle_1$ and $f' \in E\triangle_2$ with $f, f' \notin ET$. As before, we express $f, f'$ as words on $ET$, say $w(f), \, w(f')$ respectively. We attach two loops at $e$, one is labelled by $w(f)$ and the other one is labelled by $w(f')$.
If  $(a), (b')$ occur together, we put two loops at $e$--- one is labelled by $w(f)$ and other one is labelled by $f'$. Similarly, when $(b), (a')$ occur together, we have two loops at $e$ and those are labelled by $f$ and $w(f')$ respectively. For $(b), (b')$: we put a loop at $f$ labelled by $e$ and a loop at $e$ labelled by $f'$.



Note that we ignore \emph{case 2-(i)}, as in the mentioned presentation of $H_\Gamma$ (see Theorem~\ref{thm: PS presentation}) neither $e$ is a generator nor $w(e)$ is in the expression of any relator. For \emph{case 2-(ii)}, without the loss of generality, we assume $\triangle_1$ has $0$ edge in $ET$ (otherwise, renaming $e$, it would be \emph{case 1- (ii)}). We consider the two edges $e, f$ of $\triangle_1$. As both the edges $e, f$ are outside of the spanning tree, we express both of them as words on $ET$. We put the vertex labelled by $w(e)$ and attach a loop at $w(e)$ labelled by $w(f)$. Finally, for \emph{case 2-(iii)} again without the loss of generality, we assume both $\triangle_1$ and $\triangle_2$ have $0$ edge in $ET$ (otherwise, it would be \emph{case 1- (iii)}). Let $e,f \in E\triangle_1$ and $e, f' \in E\triangle_2$. We attach two loops at $e$, one is labelled by $w(f)$ and the other one is labelled by $w(f')$.
\end{proof}



Now we describe the structure of $H_2(H_\Gamma)$ and $\widetilde{B}_0(H_\Gamma)$ for a finitely presented Bestvina--Brady group $H_\Gamma$. We apply a purely combinatorial way to compute these homological versions of group theoretical invariants for any finitely presented Bestvina--Brady group; moreover, these are obtained from the defining graph of its ambient right-angled Artin group.

\begin{thm}\label{thm: Schur of BB}
Let $H_\Gamma$ be a finitely presented Bestvina--Brady group. Also let $\widetilde{\Gamma}$ is the WLOG such that $G(\widetilde{\Gamma}) \cong H_{\Gamma}$. Then, $H_2(H_\Gamma)$ is the free abelian of rank $m$, where $m$ is the number of loops in $\widetilde{\Gamma}$. Moreover, the basis of $H_2(H_\Gamma)$ is computable. And, $\widetilde{B}_0(H_\Gamma)$ is trivial.
\end{thm}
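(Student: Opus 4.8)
The plan is to read off both invariants from the combinatorial structure of the WLOG $\widetilde{\Gamma}$ produced by Theorem~\ref{thm: bb grp LOG}, using Theorem~\ref{thm: multi. vs. log} for $\widetilde{B}_0$ and the Gilbert-type analogue \cite[Thm.~2.2]{N. D. Gilbert} for $H_2$. First I would observe that, by construction in Theorem~\ref{thm: bb grp LOG}, every edge of $\widetilde{\Gamma}$ is a \emph{loop}: each relator of the Papadima--Suciu presentation (Theorem~\ref{thm: PS presentation}) has the form $[\lambda(e), v]$ with both endpoints equal, so the underlying graph of $\widetilde{\Gamma}$ is a disjoint union of vertices each carrying some loops, and in particular $\widetilde{\Gamma}$ is connected-component-wise a bouquet. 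Consequently $\pi_1(\widetilde{\Gamma}, v)$ at each vertex $v$ is free of rank equal to the number of loops at $v$, and summing over components the total rank of $*_{i} \pi_1(\widetilde{\Gamma}, v_i)$ is exactly $m$, the total number of loops.

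For the $H_2$ statement I would invoke the Gilbert result: $H_2(H_\Gamma)$ is generated by the $\psi_S$-images of the $\lambda_i(g)$ as $g$ runs over a free basis of $*_i \pi_1(\widetilde{\Gamma}, v_i)$, which gives a generating set of size $m$. The key point is that these $m$ classes are \emph{independent} and that $H_2$ is free abelian; this I would get from Remark~\ref{rem: classifying space}, since $H_2(H_\Gamma)$ agrees with $H_2$ of the suspension of $(\widetilde{\Gamma} \cup \{\text{point}\})$. A loop in $\widetilde{\Gamma}$ contributes a free generator to $H_1$ of the graph, hence, under suspension (which shifts reduced homology up by one degree), a free $\mathbb{Z}$-summand in $H_2$ of the suspension. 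Thus $H_2(H_\Gamma) \cong \mathbb{Z}^m$ and the $\psi_S$-images form the announced computable basis.

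For the triviality of $\widetilde{B}_0(H_\Gamma)$ I would apply Theorem~\ref{thm: multi. vs. log}(b): $\widetilde{B}_0(H_\Gamma)$ is generated by the $\Psi_S$-images of the same basis elements $\lambda_i(g)$. The crucial simplification is that in $\widetilde{\Gamma}$ every generator $g$ of $\pi_1(\widetilde{\Gamma}, v_i)$ corresponds to a loop at $v_i$, so $\lambda_i(g)$ is (modulo $N$) a word that \emph{commutes} with $v_i$ by the very relation $o(e)\lambda(e) = \lambda(e)t(e)$ with $o(e) = t(e) = v_i$. Hence each generating element $[\lambda_i(g), v_i]$ of $\widetilde{B}_0$ is of the form $x \wedge y$ with $[x,y]=1$, and therefore already lies in $M_0(H_\Gamma)$; so its image in $\widetilde{B}_0(H_\Gamma) = M(H_\Gamma)/M_0(H_\Gamma)$ is trivial. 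Since these images generate $\widetilde{B}_0(H_\Gamma)$, the whole group is trivial.

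The main obstacle I expect is the bookkeeping in the first paragraph: verifying rigorously that the construction in Theorem~\ref{thm: bb grp LOG} yields \emph{only} loops and correctly identifying the loop count with the rank of the free product of fundamental groups, across all six case-splits $(1\text{-}(i))$ through $(2\text{-}(iii))$. Once the WLOG is confirmed to be a disjoint union of bouquets, both conclusions follow cleanly: the $H_2$ computation reduces to suspension homology of a bouquet of circles, and the $\widetilde{B}_0$ computation reduces to the observation that every generator is a commutator of commuting elements. The triviality of $\widetilde{B}_0$ is thus essentially formal once the loop structure is in hand, so I would concentrate the technical effort on the loop-counting step.
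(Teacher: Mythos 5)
Your identification of $\widetilde{\Gamma}$ as a disjoint union of bouquets of loops is correct and is exactly the combinatorial fact the paper relies on, and your argument for the triviality of $\widetilde{B}_0(H_\Gamma)$ is essentially the paper's: each generator supplied by Theorem~\ref{thm: multi. vs. log}(b) is the class of a single commutator $[\widetilde{\lambda(e)},\widetilde{v}]$ lying in $K(F_n)\cap N$ (equivalently, an element $x\wedge y$ with $[x,y]=1$ in $H_\Gamma$), so $N=\langle K(F_n)\cap N\rangle$ and the quotient vanishes.

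The $H_2$ part, however, has a genuine gap. Remark~\ref{rem: classifying space} computes the homology of the presentation $2$-complex $K(\widetilde{\Gamma})$, not of the group: for a presentation $2$-complex one only has the exact sequence $\pi_2(K)\to H_2(K)\to H_2(G)\to 0$, so $H_2(H_\Gamma)$ is merely a \emph{quotient} of $H_2\bigl(\Sigma(\widetilde{\Gamma}\cup\{\mathrm{pt}\})\bigr)\cong\ZZ^m$, with equality only if $K(\widetilde{\Gamma})$ is aspherical --- a property you have not established and which is a delicate matter for WLOG presentations (cf.\ \cite{Harlander-Rosebrock}). This is also why the paper invokes the remark only for the spanning-forest group $\mathcal{G}$, where $H_2(K(\mathcal{T}))=0$ and the quotient statement suffices. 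Your argument therefore yields only that $H_2(H_\Gamma)$ is generated by $m$ elements; it gives neither freeness nor the lower bound on the rank. The paper closes this differently: writing $H_2(H_\Gamma)\cong N/[N,F_n]$ with $N\leq[F_n,F_n]$, it uses the injectivity of the inclusion-induced map $N/[N,F_n]\to[F_n,F_n]/[[F_n,F_n],F_n]\cong H_2(F_n^{\mathrm{ab}})\cong\ZZ^{n(n-1)/2}$ to conclude that $H_2(H_\Gamma)$ is a subgroup of a free abelian group, hence free abelian, and then takes the $\psi_S$-images of the $m$ loop labels as a basis. Some independence argument of this kind --- for instance, checking that the images of the loop labels in $[F_n,F_n]/[[F_n,F_n],F_n]$ are linearly independent --- is what your proof is missing.
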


\begin{proof} The graph $\Gamma$ is connected and $\triangle_\Gamma$ is simply connected as $H_\Gamma$ is finitely presented. Let $V\Gamma = \{ a_1, \ldots, a_{n+1}\}$ and we denote the edge joining the vertices $a_i$ and $a_j$ as $e_{i,j}$. We choose $T$ to be the favourable spanning tree of $\Gamma$ and so $|ET|=n$ and $\Gamma$ has minimal number of unfavourable triangles with respect to $T$; let $\Gamma$ has $\ell$ many unfavourable triangles with respect to the tree $T$. We rename the edges of $T$ as $v_i$ and let $S= \{ v_1, \ldots, v_n \}$.
    From Theorem~\ref{thm: PS presentation}, $H_{\Gamma} \cong F_n/R$, where $F_n (\cong F(S))$ is the free group generated by the edges of $T$. First, we compute $\widetilde{\Gamma}$ as described in Theorem~\ref{thm: bb grp LOG} and then consider the spanning forest, say $\mathcal{T}$, for $\widetilde{\Gamma}$. Hence, we have $V\widetilde{\Gamma} = S \sqcup L = \{ v_1, \ldots, v_n, w_1, \ldots, w_\ell\}$, where $w_i$ is a word representative of a single edge of each unfavourable triangle for $i= 1, \ldots, \ell$. From the construction it is clear that $\widetilde{\Gamma}$ has $n + \ell$ many connected components. Also, $\mathcal{T}$ is a collection of $n+ \ell$ isolated vertices namely $v_1, \ldots, v_n, w_1, \ldots, w_\ell$. Let $\mathcal{G} (= G(\mathcal{T}))$ is the WLOG group corresponding to $\mathcal{T}$. Note that $\mathcal{G} \cong F(S) \cong F_n$ as $w_1, \ldots, w_\ell$ are redundant generators. Because, $w_i = w(v_1, \ldots, v_n)$ for $i= 1, \ldots, \ell$. Now we consider the obvious quotient map $\mathcal{G} \rightarrow H_\Gamma$ and let $N$ be the kernel of the quotient map $\mathcal{G} \rightarrow H_\Gamma$. From~\cite[Thm.~2.2(b)]{N. D. Gilbert}, $H_2(H_\Gamma) \cong N/[N, F_n]$
   
    The inclusion-induced mapping $N/[N, F_n] \rightarrow [\,F_n, F_n\,]/ [\,[F_n, F_n], F_n\,]$ is injective. We are aware of $[\,F_n, F_n\,]/ [\,[F_n, F_n], F_n\,] \cong H_2(F_n^{\text{ab}})$ and $H_2(F_n^{\text{ab}})$ is free abelian of rank $n(n-1)/2$. We remind the reader from the proof of Theorem~\ref{thm: multi. vs. log} that the labelling function $\lambda \colon E\widetilde{\Gamma} \rightarrow S^*$ induces a group homomorphism $\lambda_i \colon \pi_1(\widetilde{\Gamma}, v_i) \rightarrow F_n$.
    From~\cite[Thm.~2.2(b)]{N. D. Gilbert}, $H_2(H_\Gamma)$ is generated by the images under $\psi_S$ of elements $\lambda_i(g)$, where $g$ runs through a basis of the free group $*^n_{i=1} \pi_1(\widetilde{\Gamma}, v_i)$. 
    Since $H_2(H_\Gamma) \cong N/[N, F_n]$, $H_2(H_\Gamma)$ is also a free abelian group of rank $m$, where $m$ is the number of loops in $\widetilde{\Gamma}$. In fact, $H_2(H_\Gamma)$ is generated by the images of the loop labels under $\psi_S$. Thus the basis of $H_2(H_\Gamma)$ is algorithmically computable through the map $\psi_S$. This completes the proof.

    From Theorem~\ref{thm: multi. vs. log}, $\widetilde{B_0}(H_\Gamma) \cong N/\langle K(F_n) \cap N \rangle$. Clearly, $\langle K(F_n) \cap N \rangle$ is generated by the commuting relations arise from the triangles of $\Gamma$ and which is same as $N$. This holds as $E\widetilde{\Gamma}$ consists of loops only. Also, note that each triangle of $\Gamma$ represents a copy of $\ZZ^2$ in $H_\Gamma$. Hence it follows that $\widetilde{B_0}(H_\Gamma)$ is trivial.
    \end{proof}


\subsection{Artin groups as WLOG groups.}\label{sec: even Arin gp} 
Our motive is to give WLOG group structure to \emph{all} Artin groups and then compute their Schur multipliers from~\cite[Thm.~2.2(b)]{N. D. Gilbert} and $\widetilde{B_0}$ from Theorem~\ref{thm: multi. vs. log} without studying their Salvetti complexes, which is conjectured to be a model for the corresponding classifying spacees of the Artin
groups. In other words, without assuming the $K(\pi, 1)$ conjecture and also, Howlett's result on the second integral homology of Coxeter groups.

\begin{thm}\label{thm: multiplier vs Artin}
    Let $A$ be an Artin group. Then the WLOG group presentation of $A$ is computable. Moreover, $H_2(A)$ and $\widetilde{B}_0(A)$ can be derived from that particular presentation. 
\end{thm}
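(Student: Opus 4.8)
The plan is to reduce everything to Theorem~\ref{thm: multi. vs. log} and \cite[Thm.~2.2(b)]{N. D. Gilbert} by first recasting each defining relation of $A$ in the WLOG form $o(e)\lambda(e)=\lambda(e)t(e)$. Fix a pair $\{a_i,a_j\}$ with $m:=m_{ij}<\infty$; the only relation involving this pair is $\langle a_i,a_j\rangle^{m}=\langle a_j,a_i\rangle^{m}$, and I would split on the parity of $m$. If $m=2k$ is even, then using $\langle a_i,a_j\rangle^{m}=(a_ia_j)^k=a_i\,(a_ja_i)^{k-1}a_j$ and $\langle a_j,a_i\rangle^m=(a_ja_i)^k=\bigl((a_ja_i)^{k-1}a_j\bigr)a_i$, the relation becomes $a_i\,w=w\,a_i$ with $w=(a_ja_i)^{k-1}a_j$, i.e.\ a loop at $a_i$ labelled by $w$. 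If $m=2k+1$ is odd, then $\langle a_i,a_j\rangle^m=a_i(a_ja_i)^k$ and $\langle a_j,a_i\rangle^m=(a_ja_i)^k a_j$, so the relation becomes $a_i\,w=w\,a_j$ with $w=(a_ja_i)^k$, i.e.\ an oriented edge from $a_i$ to $a_j$ labelled by $w$. Carrying this out for every pair with $m_{ij}<\infty$, and omitting the pair entirely when $m_{ij}=\infty$, produces a finite WLOG $\Gamma_A$ on the vertex set $\{a_1,\dots,a_n\}$ that is read off directly from the Artin matrix. Since the resulting WLOG relators coincide, as elements of the free group, with the original Artin relators rewritten, $\mathcal{P}(\Gamma_A)$ defines $A$; hence $G(\Gamma_A)\cong A$ and this presentation is algorithmically computable.

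With $\Gamma_A$ in hand the structural statements follow from the earlier results. For $\widetilde{B}_0(A)$ I would invoke Theorem~\ref{thm: multi. vs. log}(b): choosing a spanning tree in each connected component of $\Gamma_A$ and a base vertex $v_i$ in each component, $\widetilde{B}_0(A)$ is finitely generated by the $\Psi_S$-images of the loop labels, i.e.\ by the images of $\lambda_i(g)$ as $g$ ranges over a basis of $*_{i}\,\pi_1(\Gamma_A,v_i)$. For $H_2(A)$ I would apply \cite[Thm.~2.2(b)]{N. D. Gilbert} in exactly the same way, replacing $\Psi_S$ by $\psi_S$, to obtain a computable generating set for $H_2(A)$. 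Both outputs are governed purely by the combinatorics of $\Gamma_A$: the components are determined by the odd-labelled edges, while the even relations contribute loops, so the free rank of each $\pi_1(\Gamma_A,v_i)$ counts the independent cycles (loops included) of that component.

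The essential but routine verification is that the two displayed rewritings are genuine word identities, so that no information is lost in passing between the Artin and WLOG presentations; I would check the even and odd identities directly. The main obstacle I anticipate is bookkeeping rather than conceptual: one must cleanly separate the role of the odd edges, which build the underlying graph and hence the homotopy type of $\Gamma_A$, from that of the even loops, and then translate the abstract generating sets supplied by Theorem~\ref{thm: multi. vs. log} and \cite{N. D. Gilbert} into explicit elements indexed by the relations of $A$. A secondary point requiring care is that $\Gamma_A$ is genuinely a WLOG and not a simplicial graph---loops and edges carrying nontrivial words both occur---so the hypotheses of Theorem~\ref{thm: multi. vs. log} must be applied in their full, non-simplicial generality; specializing the construction to $m_{ij}\in\{2,4,6,\dots\}$ and to $m_{ij}\in\{2,\infty\}$ then yields the stated corollaries for even Artin groups and right-angled Artin groups.
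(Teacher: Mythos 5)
Your proposal is correct and follows essentially the same route as the paper: the identical parity split producing a loop at $a_i$ labelled by $a_j(a_ia_j)^{m_{ij}/2-1}$ in the even case and an oriented edge from $a_i$ to $a_j$ labelled by $(a_ja_i)^{(m_{ij}-1)/2}$ in the odd case, followed by the same appeals to Theorem~\ref{thm: multi. vs. log} and \cite[Thm.~2.2(b)]{N. D. Gilbert}. The only difference is that the paper pushes the conclusion slightly further, identifying $H_2(A)\cong\ZZ^{m}$ with $m$ the number of edges of $\widetilde{\Gamma}$ outside a spanning forest and $\widetilde{B}_0(A)\cong\ZZ^{p}$ with $p$ the number of non-loop such edges, whereas you stop at computable generating sets.
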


\begin{proof}
    Let $\Gamma$ be the Artin-Tits system corresponding to the given Artin group $A$. We construct a WLOG $\widetilde{\Gamma}$ with $V\widetilde{\Gamma} =\{a_1, \ldots, a_n \}$. If $m_{ij}$ is \emph{even}, we put a loop at $a_i$ labelled by $a_j(a_ia_j)^{\frac{m_{ij}}{2}-1}$ and if $m_{ij}$ is \emph{odd}, we put an edge $e_{i,j}$ labelled by $(a_ja_i)^{\frac{m_{ij}-1}{2}}$. We remind from Section~\ref{sec: intro} that the relations are of the form $\langle a_i, a_j \rangle^{m_{ij}} = \langle a_j, a_i \rangle^{m_{ij}}$. And, 
    $\langle a_i, a_j \rangle^{m_{ij}} = (a_ia_j)^{m_{ij}/2}$ if $m_{ij}$ is even and $\langle a_i, a_j \rangle^{m_{ij}} = (a_ia_j)^{(m_{ij}-1)/2}a_i = a_i(a_ja_i)^{(m_{ij}-1)/2}$ if $m_{ij}$ is odd.
    Hence, a direct computation shows that $G(\widetilde{\Gamma}) \cong A$. Clearly, $\widetilde{\Gamma}$ is disconnected and having $\ell$ many connected components, where $\ell = 1+ \# \{ a_i \in V\Gamma \mid m_{ij} = \text{ even or } m_{ij} = \infty, \, \forall a_j \in V\Gamma \}$. We choose one vertex from each connected component. Without the loss of generality let $S = \{a_1, \ldots, a_\ell\}$ be a choice of vertices, one from each connected component of $\widetilde{\Gamma}$. Also, let $\mathcal{T}$ be the spanning forest of $\widetilde{\Gamma}$ and $G(\mathcal{T}) = \mathcal{G}$. Then $A \cong G(\widetilde{\Gamma}) = \mathcal{G}/ N$, where $N$ is the normal closure of the set $R_\mathcal{T}$ in $\mathcal{G}$,

\begin{equation*}
   R_\mathcal{T} = \{ o(e) \, \lambda(e) \, t(e)^{-1} \, \lambda(e)^{-1} \, \, | \, \, e \in E\widetilde{\Gamma}\setminus E\mathcal{T}\}.
\end{equation*}

From~\cite[Thm.~2.2(b)]{N. D. Gilbert}, $H_2(A) \cong N/[N, \mathcal{G}] \cong \ZZ^{m}$, where $m = |E\widetilde{\Gamma}\setminus E(\mathcal{T})|$ and is generated by the $\psi_S$-images of $R_\mathcal{T}$.
On the other hand, applying Theorem~\ref{thm: multi. vs. log}, $\widetilde{B}_0(A) \cong N/\langle K(\mathcal{G}) \cap N\rangle$. Note that $\widetilde{B}_0(A)$ is trivial if and only if $\widetilde{\Gamma}$ has only loops outside the spanning tree $\mathcal{T}$. Hence, $\widetilde{B}_0(A) \cong \ZZ^{p}$, where $p$ is the number of non-loop edges of $\widetilde{\Gamma}$ outside the spanning tree $\mathcal{T}$. Furthermore, the basis of $\widetilde{B}_0(A)$ is $\Psi_S$-images of the elements of $N$ that come from the non-loop edges outside the set $E\mathcal{T}$.
\end{proof}

As we discussed in Section~\ref{sec: intro}, an even Artin group $A_{\Gamma}$ has the following presentation:

\[
A_\Gamma = \langle a_1, \ldots, a_n \, | \, (a_ia_j)^{m_{ij}/2} \, = \, (a_ja_i)^{m_{ij}/2}; \,  i \neq j, \text{ and } m_{ij} \text{ is even} \rangle,
\]

since we omit the relation between $a_i$ and $a_j$, when $m_{ij} = \infty$. In fact each relator of $A_\Gamma$ can be replaced by the following,

\begin{equation}\label{eq: even pre}
   a_ia_j(a_ia_j)^{\frac{m_{ij}}{2} - 1} \, = \, a_j(a_ia_j)^{\frac{m_{ij}}{2} - 1}a_i; \,  i \neq j, \text{ and } m_{ij} \text{ is even }.
\end{equation}


\begin{cor}\label{cor: multiplier vs even Artin}
    Let $A_\Gamma$ be an even Artin group. Then the WLOG group presentation of $A_\Gamma$ is computable. Also, the structure of $H_2(A_\Gamma)$ and $\widetilde{B}_0(A_\Gamma)$ follow from that particular presentation. 
\end{cor}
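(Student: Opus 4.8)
The plan is to derive Corollary~\ref{cor: multiplier vs even Artin} as a direct specialization of Theorem~\ref{thm: multiplier vs Artin}, exploiting the fact that an even Artin group has no odd labels $m_{ij}$ and hence the general WLOG construction produces \emph{only} loops and no non-loop edges. First I would recall the presentation~\eqref{eq: even pre}: every defining relator of $A_\Gamma$ is of the form $a_ia_j(a_ia_j)^{m_{ij}/2-1} = a_j(a_ia_j)^{m_{ij}/2-1}a_i$, which is precisely the even case $o(e)\lambda(e)=\lambda(e)t(e)$ of the WLOG rewriting in the proof of Theorem~\ref{thm: multiplier vs Artin}, with a loop attached at $a_i$ labelled by $\lambda = a_j(a_ia_j)^{m_{ij}/2-1}$. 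Thus the WLOG $\widetilde{\Gamma}$ associated to $A_\Gamma$ consists of the vertex set $V\widetilde{\Gamma}=\{a_1,\dots,a_n\}$ together with a loop at $a_i$ for each finite even label $m_{ij}$, and no edges between distinct vertices. This establishes computability of the WLOG presentation, since determining the labels and attaching the corresponding loops is algorithmic.

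Next I would read off $H_2(A_\Gamma)$ and $\widetilde{B}_0(A_\Gamma)$ from Theorem~\ref{thm: multiplier vs Artin} applied to this particular $\widetilde{\Gamma}$. Because $\widetilde{\Gamma}$ has no non-loop edges, every vertex is its own connected component, so the number of components is $\ell = n$ and the spanning forest $\mathcal{T}$ is just the $n$ isolated vertices, with $G(\mathcal{T})=\mathcal{G}$ the free group on $a_1,\dots,a_n$. The edges of $\widetilde{\Gamma}\setminus\mathcal{T}$ are exactly the loops, one for each finite even $m_{ij}$ (equivalently, for each edge of the underlying Artin--Tits graph $\Gamma$). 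Hence by the $H_2$ computation in Theorem~\ref{thm: multiplier vs Artin} we obtain $H_2(A_\Gamma)\cong N/[N,\mathcal{G}]\cong\ZZ^{m}$, where $m=|E\Gamma|$ is the number of (even) edges, with explicit generators given by the $\psi_S$-images of the loop labels in $R_\mathcal{T}$.

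For the Bogomolov multiplier I would invoke the sharp criterion established in Theorem~\ref{thm: multiplier vs Artin}: $\widetilde{B}_0(A)\cong\ZZ^{p}$ where $p$ is the number of \emph{non-loop} edges outside the spanning forest. Since $\widetilde{\Gamma}$ contains \emph{only} loops, $p=0$, and therefore $\widetilde{B}_0(A_\Gamma)$ is trivial. Equivalently, one checks directly that $\langle K(\mathcal{G})\cap N\rangle = N$: each relator coming from a loop is already a commutator-type relation (it forces $a_i$ to commute with the subword generated along the loop, reflecting a copy of $\ZZ^2$ or an abelian piece), so $N/\langle K(\mathcal{G})\cap N\rangle$ collapses.

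The only genuine point requiring care—the main obstacle—is verifying that the even-case rewriting~\eqref{eq: even pre} faithfully realizes $A_\Gamma$ as the WLOG group $G(\widetilde{\Gamma})$ with \emph{purely loop} edges, i.e.\ that the loop relation $a_i\,\lambda = \lambda\,a_i$ with $\lambda=a_j(a_ia_j)^{m_{ij}/2-1}$ is equivalent to the original braid relation of even length. This is a short direct manipulation (multiplying out both sides and comparing), and once it is in hand the corollary follows immediately by transporting the conclusions of Theorem~\ref{thm: multiplier vs Artin} to the special shape of $\widetilde{\Gamma}$; no new homological input is needed.
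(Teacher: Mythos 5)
Your proposal is correct and follows essentially the same route as the paper: both construct the WLOG $\widetilde{\Gamma}$ with a loop at $a_i$ labelled $a_j(a_ia_j)^{m_{ij}/2-1}$ for each finite even $m_{ij}$, observe that the spanning forest is the set of isolated vertices so $\mathcal{G}=F_n$, read off $H_2(A_\Gamma)\cong N/[N,F_n]\cong\ZZ^{|E\Gamma|}$ from the loop count, and conclude triviality of $\widetilde{B}_0(A_\Gamma)$ because $\langle K(\mathcal{G})\cap N\rangle=N$ when all edges are loops. The only cosmetic difference is that you frame everything as a specialization of Theorem~\ref{thm: multiplier vs Artin} while the paper re-runs the construction explicitly; the mathematical content is identical.
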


\begin{proof}
First we emphasize on the construction of $\widetilde{\Gamma}$ with the aim that $G(\widetilde{\Gamma}) \cong A_\Gamma$.
We construct $\widetilde{\Gamma}$ in the following way. The vertices of $\widetilde{\Gamma}$ are $a_1, \ldots, a_n$---let us call $V = \{ a_1, \ldots, a_n\}$; and a loop at each vertex $a_i$ labelled by $a_j(a_ia_j)^{\frac{m_{ij}}{2}-1}$ if and only if  $i \neq j$ and $m_{ij}$ is even. It is straightforward from~\eqref{eq: even pre}, $G(\widetilde{\Gamma}) \cong A_{\Gamma}$. Note that $\widetilde{\Gamma}$ is disconnected and having as many connected components as the order of $\widetilde{\Gamma}$. Now, let $G(\widetilde{\Gamma})$ be the WLOG group associated to the WLOG $\widetilde{\Gamma}$. Clearly, $G(\widetilde{\Gamma}) \cong A_{\Gamma} \cong F(V)/R$, where $R$ is the normal closure of the set $\{ a_ia_j(a_ia_j)^{\frac{m_{ij}}{2} - 1} \, = \, a_j(a_ia_j)^{\frac{m_{ij}}{2} - 1}a_i; \,  i \neq j\}$ in $F(V)$. The spanning
forest in $\widetilde{\Gamma}$ is just $V$; the WLOG group based on the spanning forest of $\widetilde{\Gamma}$, $\mathcal{G}$ is the free group $F_n = F(V)$. 
Let us consider the canonical quotient map $f \colon F_n \longrightarrow A_{\Gamma}$ and $N$ be the kernel of the map $f$. From~\cite[Thm.~2.2(b)]{N. D. Gilbert} and following the same procedure as discussed in the proof of Theorem~\ref{thm: Schur of BB}, $H_2(A_\Gamma) \cong N/[N, F_n]$. Since, $H_2(A_\Gamma)$ is subgroup of $H_2(F_n^{\text{ab}})$, $H_2(A_\Gamma)$ is also a free-abelian group. In fact, the rank of the group $H_2(A_\Gamma)$ is the number of loops present in $\widetilde{\Gamma}$; and is equal to $|E\Gamma|$. We remind that $\Gamma$ is the Artin-Tits system associated to $A_\Gamma$. In short, $H_2(A_\Gamma) \cong \ZZ^r$, where $r$ is the number of edges in the graph $\Gamma$. Finally, we compute $\widetilde{B_0}(A_\Gamma)$. Note that, $\langle K(F_n) \cap N \rangle$ is generated by the loop labels of $\widetilde{\Gamma}$.
Hence, $\widetilde{B_0}(A_\Gamma) \cong N/\langle K(F_n) \cap N \rangle$ is trivial.
\end{proof}

This leads to the following corollary.

\begin{cor}\label{cor: raag}
Let $G_\Gamma$ be a right-angled Artin group. Then $\widetilde{B}_0(G_\Gamma)$ is trivial.   
\end{cor}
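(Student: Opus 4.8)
The plan is to obtain Corollary~\ref{cor: raag} as an immediate specialisation of Corollary~\ref{cor: multiplier vs even Artin}. Recall that a right-angled Artin group $G_\Gamma$ is precisely an Artin group in which every finite label satisfies $m_{ij}=2$, so in particular it is an \emph{even} Artin group whose Artin--Tits system has all labels equal to $2$. First I would invoke Corollary~\ref{cor: multiplier vs even Artin}, which asserts that $\widetilde{B}_0(A_\Gamma)$ is trivial for \emph{every} even Artin group $A_\Gamma$. Since $G_\Gamma$ is such an $A_\Gamma$, the conclusion $\widetilde{B}_0(G_\Gamma)=1$ follows directly.

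To make the argument self-contained it is worth spelling out how the WLOG construction degenerates in the right-angled case. When $m_{ij}=2$ the defining relation becomes $a_ia_j=a_ja_i$, and the loop prescribed in the proof of Corollary~\ref{cor: multiplier vs even Artin} is labelled by $a_j(a_ia_j)^{\frac{m_{ij}}{2}-1}=a_j(a_ia_j)^{0}=a_j$. Thus $\widetilde{\Gamma}$ consists of the vertices $a_1,\dots,a_n$ together with, for each edge $(a_i,a_j)\in E\Gamma$, a loop at $a_i$ labelled by the single letter $a_j$, recovering exactly the Wirtinger-type WLOG presentation of a RAAG noted in the introduction. The spanning forest is the bare vertex set $V$, so $\mathcal{G}\cong F(V)=F_n$, and every edge of $\widetilde{\Gamma}$ lying outside the forest is a loop.

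This last observation is the crux: by Theorem~\ref{thm: multi. vs. log}(b) we have $\widetilde{B}_0(G_\Gamma)\cong N/\langle K(F_n)\cap N\rangle$, and the normal closure $N$ is generated by the elements $[\,a_j,a_i\,]$ arising from the commutator relations, one for each edge. Because each such generator of $N$ is itself a commutator in $F_n$, it lies in $K(F_n)\cap N$, whence $\langle K(F_n)\cap N\rangle=N$ and the quotient is trivial. Equivalently, in the language of Corollary~\ref{cor: multiplier vs even Artin}, $\widetilde{\Gamma}$ has \emph{only} loops outside its spanning forest, which is exactly the stated criterion for $\widetilde{B}_0$ to vanish.

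I do not anticipate a genuine obstacle here, since the result is a corollary rather than an independent theorem; the only point requiring care is the bookkeeping that confirms the RAAG labels $m_{ij}=2$ really do reduce the even-Artin construction to loops labelled by single generators, so that the triviality criterion of Corollary~\ref{cor: multiplier vs even Artin} applies verbatim. Once that identification is made, the triviality of $\widetilde{B}_0(G_\Gamma)$ is automatic.
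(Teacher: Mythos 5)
Your proposal is correct and follows essentially the same route as the paper: the paper likewise realises $G_\Gamma$ as the WLOG group on the graph with a loop at $a_i$ labelled $a_j$ for each edge, takes the spanning forest to be the bare vertex set so that $\mathcal{G}\cong F_n$, and concludes via Theorem~\ref{thm: multi. vs. log} that $\widetilde{B}_0(G_\Gamma)\cong N/\langle K(F_n)\cap N\rangle$ is trivial because $N$ is normally generated by commutators (and conjugates of commutators are commutators). Your additional framing as a specialisation of Corollary~\ref{cor: multiplier vs even Artin} is consistent with the paper's placement of the result but is not how the paper's proof is actually written; the substance is the same.
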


\begin{proof}
    Let $\Gamma$ be a finite simplicial graph with vertex set $V\Gamma = \{ a_1, a_2, \ldots, a_n\}$. Then $G_\Gamma = \bigl\langle V\Gamma \mid [a_i,a_j]=1 \text{ for each edge } (a_i,a_j) \in E\Gamma \bigr\rangle$. First we note that $G_\Gamma$ is a WLOG group. The associated WLOG $\widetilde{\Gamma}$ having the vertex set $\{a_1, a_2, \ldots, a_n \}$ and, at the vertex $a_i$ a loop labelled by $a_j$ whenever $(a_i, a_j) \in E\Gamma$, $(1 \leq i, j \leq n)$. In a similar fashion as in the proof of Theorem~\ref{thm: Schur of BB}, the spanning forest $\mathcal{T}$ of $\widetilde{\Gamma}$ is a collection of $n$ many vertices, namely $\{a_1, a_2, \ldots, a_n \}$. $G(\widetilde{\Gamma}) \cong G_\Gamma \cong F_n/N$, where $F_n$ is a finitely generated free group generated by $V\Gamma$ and $N$ is the normal closure of the commuting relations coming from the edges of $\Gamma$. Clearly, $\langle K(F) \cap N \rangle = N$ and hence from Theorem~\ref{thm: multi. vs. log}, $\widetilde{B}_0(G_\Gamma) \cong N/\langle K(F) \cap N \rangle$ is trivial.
\end{proof}


\subsection{Examples}~\label{sec: examples} We end the article by giving a couple of examples. The illustrations represent both Bestvina--Brady group and Artin group as WLOG groups and produce their respective Schur multipliers and the functor $\widetilde{B_0}$.

\begin{exam}
Let $\Gamma$ be the graph in \Cref{fig:exam 1}. 
Choosing the spanning tree $T = \{e_{1,2}, \, e_{2,3}, \, e_{2,4}, \, e_{2,5}, \, e_{4,6}\}$ as indicated, the presentation of the Bestvina--Brady group is given as follows:
\[
H_\Gamma = \langle e_{1,2}, \, e_{2,5}, \, e_{2,4}, \, e_{2,3}, \,   e_{4,6} \mid [e_{1,2}, e_{2,5}],\, [e_{2,5}, e_{2,4}],\, [e_{2,4}, e_{2,3}], \, [e_{4,6}, \, e_{2,5}^{-1} e_{2,4} ]\rangle
\]
This particular $H_\Gamma$ is not isomorphic to any right-angled Artin group (see \cite[Proposition 9.4]{PS} for details). 

\begin{figure}[H]
    \centering
\begin{tikzpicture}[shorten >=1pt,node distance=20cm,auto]
\tikzset{
    edge/.style={draw=black,postaction={on each segment={mid arrow=black}}}
}
\node[fill=black!100, state, scale=0.10, vrtx/.style args = {#1/#2}{label=#1:#2}] (1) [vrtx=left/$a_6$] {};

\node[fill=black!100, state, scale=0.10, vrtx/.style args = {#1/#2}{label=#1:#2}] (2) [vrtx=right/$a_4$] [ below right of = 1] {};

\node[fill=black!100, state, scale=0.10, vrtx/.style args = {#1/#2}{label=#1:#2}] (3) [vrtx=left/$a_5$] [ below left of = 1] {};

\node[fill=black!100, state, scale=0.10, vrtx/.style args = {#1/#2}{label=#1:#2}] (4) [vrtx=right/$a_3$] [ below right of = 2] {};

\node[fill=black!100, state, scale=0.10, vrtx/.style args = {#1/#2}{label=#1:#2}] (5) [vrtx=below/$a_2$] [ below left of = 2] {};

\node[fill=black!100, state, scale=0.10, vrtx/.style args = {#1/#2}{label=#1:#2}] (6) [vrtx=left/$a_1$] [ below left of = 3] {};

\draw[edge] (6) -- (5) node[midway, below] {$e_{1,2}$};
\draw[edge] (5) -- (3) node[midway, left] {$e_{2,5}$};
\draw[edge] (5) -- (2) node[midway, right] {$e_{2,4}$};
\draw[edge] (5) -- (4) node[midway, below] {$e_{2,3}$};
\draw[edge] (2) -- (1) node[midway, right] {$e_{4,6}$};
\draw[edge] (6) -- (3);
\draw[edge] (3) -- (2);
\draw[edge] (3) -- (1);
\draw[edge] (4) -- (2);
\end{tikzpicture}
\caption{}
\label{fig:exam 1}
\end{figure}








Rename the edges $e_{1,2}, \, e_{2,5}, \, e_{2,4}, \, e_{2,3}, \,   e_{4,6}$ as $v_1, \, v_2, \, v_{3}, \, v_{4}, \, v_{5}$ respectively and let $S = \{ v_1, \, v_2, \, v_{3}, \, v_{4}, \, v_{5}\}$.
First we construct $\widetilde{\Gamma}$ as depicted in the proof of Theorem~\ref{thm: bb grp LOG}. The vertex set of $\widetilde{\Gamma}$ is $V\widetilde{\Gamma} = S = \{ v_1, \, v_2, \, v_{3}, \, v_{4}, \, v_{5}\}$. The loops of $\widetilde{\Gamma}$ are the following: at the vertex $v_i$ one loop labelled by $v_{i+1}$ for $i= 1, 2, 3$; and at $v_5$ one loop labelled by $v_2^{-1}v_3$ (see the Figure~\ref{fig: wlog BB} below).

\begin{figure}[H]
    \centering
\begin{tikzpicture}[shorten >=1pt,node distance=2.5cm,auto]
\tikzset{
    edge/.style={draw=black,postaction={on each segment={mid arrow=black}}}
}
\node [state,scale=0.70] (1) {$v_1$};
\node[state, scale=0.68] (2) [ right of = 1] {$v_2$};
\node[state, scale=0.68] (3) [ right of = 2] {$v_3$};
\node[state, scale=0.68] (4) [ right of = 3] {$v_4$};
\node[state, scale=0.68] (5) [ right of = 4] {$v_5$};
\draw [->] (1) to [out=135,in=45,looseness=8] node[above] {$v_2$} (1);
\draw [->] (2) to [out=135,in=45,looseness=8] node[above] {$v_3$} (2);
\draw [->] (3) to [out=135,in=45,looseness=8] node[above] {$v_4$} (3);
\draw [->] (5) to [out=135,in=45,looseness=8] node[above] {$v_2^{-1}v_3$} (5);
\end{tikzpicture}
\caption{}
\label{fig: wlog BB}
\end{figure}

Following the notation as stated in Theorem~\ref{thm: Schur of BB}, $H_\Gamma \cong F_5/N$, where $F_5 = \langle v_1, \, v_2, \, v_{3}, \, v_{4}, \, v_{5} \rangle$ and $N =\left\langle \! \left\langle [v_{1}, v_{2}],\, [v_{2}, v_{3}],\, [v_{3}, v_{4}], \, [v_5, v_2^{-1}v_3]\right\rangle \! \right\rangle_{F_5}$. $H_2(H_\Gamma)$ is isomoprphic to $\ZZ^4$ with a basis given by the $\psi_S$-images of the commutators $[v_i, v_{i+1}]$ for $i= 1, 2, 3$ and $[v_5, v_2^{-1}v_3]$. On the other hand, it is straight forward that $\widetilde{B}_0(H_\Gamma) \cong N/\langle K(F_5) \cap N \rangle$ is trivial.


\end{exam}

\begin{exam}
    Let $A = \langle a_1, a_2, a_3 \mid a_1 (a_2 a_1)^2 = (a_2 a_1)^2 a_2, \, a_1(a_3a_1) = (a_3a_1)a_3, \, a_2(a_3a_2)^3 = (a_3a_2)^3 a_3\rangle$. The corresponding Artin-Tits system $\Gamma$ is as depicted in Figure~\ref{fig:exam 2}.

\begin{figure}[H]
    \centering
\begin{tikzpicture}[shorten >=1pt,node distance=20cm,auto]
\tikzset{
    edge/.style={draw=black,postaction={on each segment={mid arrow=black}}}
}


\node[fill=black!100, state, scale=0.10, vrtx/.style args = {#1/#2}{label=#1:#2}] (1) [vrtx=left/$a_3$] {};

\node[fill=black!100, state, scale=0.10, vrtx/.style args = {#1/#2}{label=#1:#2}] (2) [vrtx=below/$a_2$] [ below right of = 1] {};

\node[fill=black!100, state, scale=0.10, vrtx/.style args = {#1/#2}{label=#1:#2}] (3) [vrtx=below/$a_1$] [ below left of = 1] {};



\draw[edge] (3) -- (2) node[midway, below] {$5$};
\draw[edge] (3) -- (1) node[midway, left] {$3$};
\draw[edge] (2) -- (1) node[midway, right] {$7$};
\end{tikzpicture}
\caption{}
\label{fig:exam 2}
\end{figure}

According to Theorem~\ref{thm: multiplier vs Artin}, we draw the WLOG $\widetilde{\Gamma}$ (see Figure~\ref{fig: wlog A}) so that $A \cong G(\widetilde{\Gamma})$. 

\begin{figure}[H]
    \centering
\begin{tikzpicture}[shorten >=1pt,node distance=20cm,auto]
\tikzset{
    edge/.style={draw=black,postaction={on each segment={mid arrow=black}}}
}


\node[fill=black!100, state, scale=0.10, vrtx/.style args = {#1/#2}{label=#1:#2}] (1) [vrtx=left/$a_3$] {};

\node[fill=black!100, state, scale=0.10, vrtx/.style args = {#1/#2}{label=#1:#2}] (2) [vrtx=below/$a_2$] [ below right of = 1] {};

\node[fill=black!100, state, scale=0.10, vrtx/.style args = {#1/#2}{label=#1:#2}] (3) [vrtx=below/$a_1$] [ below left of = 1] {};



\draw[edge] (3) -- (2) node[midway, below] {$(a_2a_1)^2$};
\draw[edge] (3) -- (1) node[midway, left] {$(a_3a_1)$};
\draw[edge] (2) -- (1) node[midway, right] {$(a_3a_2)^3$};
\end{tikzpicture}
\caption{}
\label{fig: wlog A}
\end{figure}

Clearly, $\widetilde{\Gamma}$ is connected and let us choose $a_1$ as the base vertex for the fundamental group $\pi_1(\widetilde{\Gamma},a_1)$. We choose the spanning tree $\mathcal{T}$ of $\widetilde{\Gamma}$ having $E\mathcal{T} = \{ e_{1,2}, e_{2,3} \}$, i.e., the edges labelled by $(a_2a_1)^2$ and $(a_3a_2)^3$ respectively. Hence, $\mathcal{G} = \langle a_1, a_2, a_3 \mid a_1 (a_2 a_1)^2 = (a_2 a_1)^2 a_2, \,  a_2(a_3a_2)^3 = (a_3a_2)^3 a_3\rangle$ and $A \cong \mathcal{G}/N$, where $N = \left\langle \! \left\langle a_1 (a_3 a_1) = (a_3 a_1) a_3\right\rangle \! \right\rangle_{\mathcal{G}}$. So, it follows that $\widetilde{B_0}(A) \cong N/\langle K(\mathcal{G} \cap N)\rangle \cong \ZZ$. Note that, in this example $\widetilde{B_0}(A)$ is cyclic as there is only one edge (non loop) outside the spanning tree $\mathcal{T}$. In other words, $\widetilde{B_0}(A) \cong \ZZ$ and is generated by the $\Psi_{a_1}$-image of the relation $a_1(a_3a_1)=(a_3a_1)a_3$. Similarly, $H_2(A) \cong \ZZ$.

\end{exam}

\section*{Acknowledgements}

\noindent The author is grateful to Manoj Kumar Yadav for bringing to attention the Bogomolov multiplier. 
The author also expresses gratitude for the generous hospitality received from Harish-Chandra Research Institute.

\end{document}